\documentclass[reqno,11pt]{amsart}

\usepackage{xcolor}
\usepackage{graphicx}
\usepackage[hidelinks]{hyperref}
\usepackage{amscd}

\usepackage{amsmath}
\usepackage{amsthm}
\usepackage{amssymb}
\usepackage{latexsym,array}
\usepackage{amsfonts}
\usepackage{shadow}
\usepackage{amsbsy}
\usepackage{amssymb}
\usepackage{dsfont}
\usepackage{mathtools}
\usepackage{enumitem}
\usepackage{doi}

\usepackage[notref, notcite, final]{showkeys}
\usepackage{color}
\usepackage{graphicx}
\usepackage[hidelinks]{hyperref}
\usepackage{cleveref}
\usepackage{fullpage}
\usepackage{comment}
\usepackage{tikz-cd}
\usepackage{tikz}
\usepackage{float}

\usepackage{dsfont}

\usepackage{cleveref}

\numberwithin{equation}{section}

\newcommand{\bigD}{\mathcal{D}}

\newtheorem{theorem}{Theorem}[section]
\newtheorem{lemma}[theorem]{Lemma}
\newtheorem{corollary}[theorem]{Corollary}
\newtheorem{proposition}[theorem]{Proposition}

\theoremstyle{definition}
\newtheorem{remark}[theorem]{{\bf Remark}}
\newtheorem{definition}[theorem]{Definition}

\newcommand{\cc}{\mathbb{C}}

\newcommand{\hh}{\mathbb{H}}

\newcommand{\pp}{\partial}
\newcommand{\rr}{\mathbb{R}}


\crefname{enumi}{}{}
\crefname{enumii}{}{}

\title[]{A polyanalytic functional calculus \\of order 2 on the $S$-spectrum}
\author{ANTONINO DE MARTINO} 
\address{Politecnico di Milano, Dipartimento di Matematica, Via E. Bonardi, 9 20133 Milano, Italy.}
\email{antonino.demartino@polimi.it}

\author{STEFANO PINTON}
\address{Politecnico di Milano, Dipartimento di Matematica, Via E. Bonardi, 9 20133 Milano, Italy.}
\email{stefano.pinton@polimi.it}

\subjclass[2020]{Primary 30G35}

\begin{document}
	
	\maketitle
	
\begin{abstract}
The Fueter theorem provides a two step procedure to build an axially monogenic function, i.e. a null-solutions of the Cauchy-Riemann operator in $ \mathbb{R}^4$, denoted by $ \mathcal{D}$. In the first step a holomorphic function is extended to a slice hyperholomorphic function, by means of the so-called slice operator. In the second step a monogenic function is built by applying the Laplace operator in four real variables ($\Delta$) to the slice hyperholomorphic function. In this paper we use the factorization of the Laplace operator, i.e.  $\Delta= \mathcal{\overline{D}} \mathcal{D}$ to split the previous procedure. From this splitting we get a class of functions that lies between the set of slice hyperholomorphic functions and the set of axially monogenic functions: the set of axially polyanalytic functions of order 2, i.e. null-solutions of $ \mathcal{D}^2$. We show an integral representation formula for this kind of functions. The formula obtained is fundamental to define the associated functional calculus on the $S$-spectrum. As far as the authors know, this is the first time that a monogenic polyanalytic functional calculus has been taken into consideration.   
\end{abstract}

\medskip

\noindent Keywords:  polyanalytic functions, quaternions, functional calculus, $S$-spectrum 
\section{Introduction}
The Fueter theorem provides a procedure to extend holomorphic functions to quaternionic-valued functions in the kernel of the operator $ \mathcal{D}$, which is a suitable generalization of the Cauchy-Riemann operator in $  \mathbb{R}^4$. The Fueter theorem is performed in two steps. In the first step the slice operator $T_F$ is applied to $ \mathcal{O}(D)$, which is the set of holomorphic functions on $ D \subseteq \mathbb{C}$, in order to get the set of slice hyperholomorphic functions on $ \Omega_D \subset \mathbb{R}^4$, which  is an open set induced by $D$ (see Theorem \ref{FS} in the sequel). This set of functions is denoted by $ \mathcal{SH}(\Omega_D)$. In the second step the Laplace operator in four real variables is applied to $ \mathcal{SH}(\Omega_D)$ to get the set of axially monogenic functions, denoted by $ \mathcal{AM}(\Omega_D)$. It is possible to visualize the previous construction by the following diagram
\begin{equation}
\label{diag1}
\mathcal{O}(D)\overset{T_{F}}{\longrightarrow} \mathcal{SH}(\Omega_D)\overset{\Delta}{\longrightarrow} \mathcal{AM}(\Omega_D).
\end{equation}
From the Fueter mapping theorem it is possible to deduce two spectral theories. From the first step one can deduce the spectral theory on the $S$-spectrum associated with the Cauchy formula of slice hyperholomorphic functions. By applying the Laplacian of four real variables to the slice hyperholomorphic Cauchy kernel it is possible to obtain a Fueter theorem in integral form and from this it is possible to deduce a monogenic functional calculus, called $ \mathcal{F}$- functional calculus. This procedure is illustrated in the following diagram
{\small
\begin{equation}
\label{diag2}
	\begin{CD}
		{\mathcal{SH}(U)} @.  {\mathcal{AM}(U)} \\   @V  VV
		@.
		\\
		{{\rm  Slice\ Cauchy \ Formula}}  @> \Delta>> {{\rm Fueter\ theorem \ in \  integral\  form}}
		\\
		@V VV    @V VV
		\\
		{{\rm  S-Functional \ calculus}} @. {{\rm \mathcal{F}-functional \ calculus}}
	\end{CD}
\end{equation}
}

In \cite{CDPS} the authors have studied a  possible splitting of the diagram \eqref{diag2} and have showed that between the set of slice hyperholomorphic functions and the set of axially monogenic functions lies the set of of axially harmonic functions. Moreover, by means of this splitting, they developed an harmonic functional calculus.  
\\ The main goal of this paper is to understand another, different splitting of \eqref{diag2}.

By rearranging the maps in the Fueter theorem it is possible to get the set of axially polyanalytic functions of order 2, i.e. functions in the kernel of $ \mathcal{D}^2$, see Section 2.  In this paper we study the splitting
\begin{equation}
\mathcal{O}(D) \overset{T_{F}}{\longrightarrow} \mathcal{SH}(\Omega_D)\overset{\mathcal{\overline{D}}}{\longrightarrow} \mathcal{AP}_2(\Omega_D)\overset{\mathcal{D}}{\longrightarrow}\mathcal{AM}(\Omega_D),
\end{equation} 
where $ \mathcal{AP}_2(\Omega_D)$ is the set of axially polyanalytic functions of order 2. The goal of this paper is to describe the central part of this diagram
\newline
\newline
{\small \small
	\begin{equation}
\label{diagg}
		\begin{CD}
			{\mathcal{SH}(U)} @. {\mathcal{AP}_2(U)}  @.  {\mathcal{AM}(U)} \\   @V  VV
			@.
			\\
			{{\rm  Slice\ Cauchy \ Formula}}  @> \overline{\mathcal{D}}>> {{\rm \mathcal{AP}_2 \ integral\ form}}@> \mathcal{D}  >> {{\rm Fueter\ thm. \ in \  integral\  form}}
			\\
			@V VV    @V VV  @V VV
			\\
			$S$-{{\rm Functional \ calculus}} @. {{\rm Polyanalytic \ functional \ calculus  \ of \ order \ 2}}@. \mathcal{F}-{{\rm functional \ calculus}}
		\end{CD}
	\end{equation}
}
\\ Axially polyanalytic functions play an important role in the study of elasticity problems, see \cite{K,M}. The theory of polyanalytic functions is also used to investigate problems in time-frequency analysis, see for example \cite{A,DMD2} and to study well-known Hilbert spaces, see for instance \cite{A1, B1, V}. For more information about polyanalytic functions see \cite{AF, B1976}.

In order to clarify the outline and contents of the paper we need to fix the notations. Let $\mathbb H$ be the skew field of quaternions
$$ \mathbb{H}:= \{q=q_0+q_1e_1+q_2e_2+q_3e_3 \, | \, q_0, q_1, q_2, q_3 \in \mathbb{R}\},$$
where the imaginary units satisfy the following relations
$$e_1^2=e_2^2=e_3^2=-1\quad \text{and}\quad e_1e_2=-e_2e_1=e_3,\ \  e_2e_3=-e_3e_2=e_1, \ \  e_3e_1=-e_1e_3=e_2.$$
Let $q \in \mathbb{H}$. We call $ \hbox{Re}(q):=q_0$ the real part of $q$ and $ \underline{q}= q_1e_1+q_2e_2+q_3e_3$ the imaginary part. We define the conjugate of $ q \in \mathbb{H}$ as $ \bar{q}=q_0- \underline{q}$ and the modulus of $ q $ as $|q|= \sqrt{q \bar{q}}$. We define $\mathbb S:=\{q\in\hh:\, \operatorname{Re}(q)=0\,\textrm{and}\, |q|=1\}$. We observe that if $J \in \mathbb{S}$ then $J^2=-1$. Therefore we can consider $J$ as an imaginary unit, and we denote by $\cc_J:=\{u+Jv\in\hh:\, u,v\in\rr\}$, an isomorphic copy of the complex numbers.
\\ We recall that the Fueter operator $ \mathcal{D}$ and its conjugate $ \mathcal{\overline{D}}$ are defined as follows
$$ \mathcal{D}:= \partial_{q_0}+ \sum_{i=1}^3 e_i \partial_{q_i} \quad \hbox{and} \quad \mathcal{\overline{D}}:= \partial_{q_0}- \sum_{i=1}^3 e_i \partial_{q_i}.$$
In this paper we show that it is possible to have an integral representation of axially polyanalytic functions of order 2 in terms of slice hyperholomorphic functions, see Section 4. More precisely, let $W\subset \hh$ be an open set and $U$ be a slice Cauchy domain such that $\overline U\subset W$. Then for $J\in\mathbb S$ and $ds_J=ds(-J)$ we have that if $f$ is left slice hyperholomorphic on $W$, then the function $\breve f^0(q)=\overline\bigD f(q)$ is polyanalytic of order $2$ and it admits the following integral representation  
$$ \breve f^0(q)=-\frac 1{2\pi}\sum_{k=0}^1(-q_0)^k\int_{\partial(U\cap\cc_J)} F_L(s,q) s^{1-k} \, ds_J\, f(s)\quad \forall q\in U; $$
where $F_L(s,q):=-4(s-\bar q)(s^2-2\operatorname{Re}(q)s+|q|^2)^{-2}$. A similar integral formula is obtained for right slice hyperholomorphic functions.
\\Furthermore, we show that it is possible to expand in series the kernel of the integral representation, see Section 5. We prove that the series is written in terms of the so-called Clifford-Appell polynomials, see \cite{CMF1}.
\\ All these tools are extremely good to define a monogenic polyanalytic functional calculus on the $S$-spectrum for bounded operators with commuting components, see Section 6. To be more precise let $T=T_0+T_1e_1+T_2e_2+T_3e_3$ be a quaternionic bounded linear operator with commuting components $T_i$, $i=0,...,3$. By considering the following operator
$$ \mathcal Q_{c,s}(T):=s^2-2\operatorname{Re}(T)s+T\bar T, $$ 
where $ \bar{T}:= T_0- e_1T_1-e_2T_2-e_3T_3$ we define the commutative $S$-spectrum of $T$ as
$$ \sigma_S(T):=\{s\in\hh :\, \mathcal{Q}_{c,s}(T)^{-1} \quad  \hbox{is not invertible}\} $$
and the $S$-resolvent set of $T$ as
$$\sigma_S(T):=\hh\setminus \sigma_S(T).$$
It turns out that the commutative $S$-spectrum is the same of the $S$-spectrum defined in \cite{CGKBOOK,CSS4} when we deal with operators with commuting components.
\\Roughly speaking for every function $\breve f^0:=\overline\bigD f$, with $f$ a left slice hyperholomorphic function, we define the polyanalytic functional calculus of order 2 as 
$$\breve f^0(T)=\frac 1{2\pi}\int_{\partial (U\cap\cc_J)} \mathcal{P}_2^L(s,T)\, ds_J\, f(s),$$
where $\mathcal{P}_2^L(s,T)=\sum_{j=0}^1 T_0^j(-1)^{j+1}F_{L}(s, T)s^{1-j}$, with $F_L(s,T):=-4 (s-\bar T)\mathcal Q_{c,s}(T)^{-2}$. Moreover, $U$ is an arbitrary bounded slice Cauchy domain with $\sigma_S(T) \subset U$, $ \bar{U} \subset \hbox{dom}(f)$, $ds_J=ds(-J)$ and $J \in \mathbb{S}$ is an arbitrary imaginary unit. A similar definition is valid for $\breve f^0= f\overline\bigD$ with $f$ a right slice hyperholomorphic function. 
Recently, a polyanalytic functional calculus has been developed in \cite{ACDS}, but for slice hyperholomorphic polyanalytic functions. To the best of our knowledge, this is the first time that a monogenic polyanalytic functional calculus has been considered.

\section{Preliminaries}

We recall some basic results and notions that we need in the sequel (for a complete introduction to this topics we refer to the book \cite{CGKBOOK}). 
We say that $U\subset \mathbb H$ is axially symmetric if, for every $u+Iv\in U$, all the elements $u+Jv\in U$ for every $J\in\mathbb S$. Moreover, we say that $U$ is a slice domain if $U\cap\rr\neq 0$ and $U\cap \cc_J$ is a domain in $\cc_J$ for every $J\in\mathbb S$. Let $U\subset\mathbb H$ be an open axially symmetric set. Let $\mathcal U\subset \rr\times\rr$ be such that $q=u+Jv\in U$ for all $(u,v)\in\mathcal U$. We say that a function $f: U\to \hh$ of the form 
\begin{equation}
\label{type}
f(q)=\alpha(u,v)+J\beta(u,v)
\end{equation}
is left slice hyperholomorphic if $\alpha$ and $\beta$ are $\hh$-valued differentiable functions such that 
$$ \alpha(u,v)=\alpha(u,-v) \quad\textrm{and}\quad \beta(u,v)=-\beta(u,-v)\quad\textrm{for any $(u,v)\in\mathcal U$}.  $$ 
Moreover, $\alpha,\, \beta$ satisfy the Cauchy-Riemann system
$$\partial_u \alpha(u,v)-\partial_v\beta(u,v)=0\quad\textrm{and}\quad\partial_v\alpha(u,v)+\partial_u\beta(u,v)=0.$$
We recall that right slice hyperholomorphic functions are of the form
\begin{equation}
\label{type2}
f(q)=\alpha(u,v)+\beta(u,v)J,
\end{equation}
where $\alpha,\, \beta$ satisfy the above conditions. The set of left (resp. right) slice hyperholomorphic functions on $U$ is denoted by $\mathcal{SH}_L(U)$ (resp. $\mathcal{SH}_R(U)$). If we do not care about the right and the left we denote this set of functions as $ \mathcal{SH}(U)$.
\\ Moreover, we say that a function is left (resp. right) axially monogenic if it is of the form \eqref{type} (resp. of the form \eqref{type2})
and it is in the kernel of the Fueter operator $ \mathcal{D}$ i.e. $\mathcal{D}f=0$ (resp. $f\mathcal{D}=0$). We denote the set of these functions (no matter left or right) as $ \mathcal{AM}(U)$.

If $U$ is an axially symmetric Cauchy domain and $\overline U\subset W$ for some open axially symmetric domain $W\subset \hh$, then for any $f\in \mathcal{SH}_L(W)$ (resp. $f\in \mathcal{SH}_R(W)$) we have the so called Cauchy formulas
\begin{equation}
\label{cauchy}
f(q)=\frac 1{2\pi}\int_{\partial(U\cap\cc_J)} S^{-1}_L(s,q)\, ds_J\, f(s)\quad \left(\textrm{resp. } \frac 1{2\pi}\int_{\partial(U\cap\cc_J)} f(s) \, ds_J\, S^{-1}_R(s,q)\right),
\end{equation}
where $J\in\mathbb S$, $ds_J=ds(-J)$,
$$ S^{-1}_L(s,q)=(s-\bar q)(s^2-2\operatorname{Re}(q)s+|q|^2)^{-1}\quad \textrm{and}\quad S^{-1}_R(s,q)=(s^2-2\operatorname{Re}(q)s+|q|^2)^{-1} (s-\bar q),$$
see \cite[Thm 2.1.32]{CGKBOOK}. The function $S^{-1}_L(s,q)$ (resp. $S^{-1}_R(s,q)$) is called the left (resp. right) slice hyperholomorphic Cauchy kernel. 
\\The definition of slice hyperholomorphic function, that we adopt in this paper is the most appropriate one for the operator theory and it comes from the Fueter mapping theorem (see \cite{CSS2, F}). This theorem puts in relation the slice hyperholomorphicity and the axially monogenicity.

In \cite{CSS}, \cite[Theorem $2.2.6$]{CGKBOOK} it is  proved that a Fueter theorem in integral form. The main advantages of this method is that one can obtain a monogenic function by integrating the suitable slice hyperholomorphic functions.
Let us consider $f\in \mathcal{SH}_L(W)$ (resp. $f\in \mathcal{SH}_R(W)$) we can state the Fueter theorem in integral form in the following way
$$\breve{f}(q)= \Delta f(q)=\frac 1{2\pi}\int_{\partial (U\cap\cc_J)} F_L(s,q)\, ds_I\, f(s)\quad \left(\textrm{resp. }  \breve{f}(q)=\Delta f(q)=\frac 1{2\pi}\int_{\partial (U\cap\cc_J)} f(s)\, ds_I\, F_R(s,q) \right)$$
where 
$$F_L(s,q):=\Delta S^{-1}_L(s,q)=-4(s-\bar q)(s^2-2\operatorname{Re}(q)s+|q|^2)^{-2},$$
and 
$$F_R(s,q):=\Delta S^{-1}_R(s,q)=-4(s^2-2\operatorname{Re}(q)s+|q|^2)^{-2} (s-\bar q)$$
are respectively the left and the right $\mathcal{F}$-kernels, then the function $\breve{f}$ is left (resp. right) axially  monogenic. This is due fact that the left (resp. right) $\mathcal{F}$-kernel is a left (resp. right) axially monogenic function in the variable $q$ and right (resp. left) slice hyperholomorphic function in the variable $s$ (see \cite[Theorem 2.2.2]{CGKBOOK} and \cite[Prop. 2.2.4]{CGKBOOK}). 

Now we want to introduce the $SC$-functional calculus, see \cite{CS}. This functional calculus is the commutative version of the $S$- functional calculus, see \cite{CGKBOOK, CSS4}. Let $X$ be a two sided quaternionic Banach module of the form $X= X_{\mathbb{R}} \otimes \mathbb{H}$, where $X_{\mathbb{R}}$ is a real Banach space. In this paper we consider $\mathcal{B}(X)$ the Banach space of all bounded right linear operators acting on $X$. In the sequel we will consider bounded operators of the form $T=T_0+T_1e_1+T_2e_2+T_3e_3$, with commuting components $T_{i}$ acting on a real vector space $X_{\mathbb{R}}$, i.e., $T_i \in \mathcal{B}(X_{\mathbb{R}})$ for $i=0,1,2,3$. The subset of $ \mathcal{B}(X)$ given by the operators $T$ with commuting components $T_i$ is denoted by $ \mathcal{BC}(X)$. 
Now, we define the $SC$-functional calculus. Let $T \in \mathcal{BC}(X)$ and  let $U$ be a slice Cauchy domain such that $\sigma_F(T)\subset U$  and $\overline{U}\subset W\subset\mathbb H$ where $W$ is an axially symmetric open domain. We define for every $f\in \mathcal{SH}_L(W)$ (resp. $f\in \mathcal{SH}_R(W)$ )
\begin{equation}
	\label{Scalleft}
	f(T):={{1}\over{2\pi }} \int_{\partial (U\cap \mathbb{C}_J)} S_L^{-1} (s,T)\  ds_J \ f(s) \qquad \left( \hbox{resp} \quad	f(T):={{1}\over{2\pi }} \int_{\partial (U\cap \mathbb{C}_J)} \  f(s)\ ds_J
	\ S_R^{-1} (s,T)\right),
\end{equation}
where 
\begin{equation}
S^{-1}_L(T):=(s-\bar T)\mathcal Q_{c,s}(T)^{-1} \quad \hbox{and} \quad S^{-1}_R(T):=\mathcal Q_{c,s}(T)^{-1} (s-\bar T)
\end{equation}
are called respectively the left and right $S$-resolvent operators.
The definition of $SC$-functional calculus is well posed since the integrals in (\ref{Scalleft}) depend neither on $U$ and nor on the imaginary unit $J\in\mathbb{S}$, see \cite[Thm. 3.2.6]{CGKBOOK}. 
\begin{remark}
It is possible to define the $S$-functional calculus for $T \in \mathcal{B}(X)$, see \cite{CGKBOOK, CSS4}. It works also for fully Clifford operators with non commuting components, see \cite{CKPS}. However, for our purpose it is enough to consider the operator in $ \mathcal{BC}(X)$.  
\end{remark}
Now we define the $\mathcal{F}$-functional calculus. It is defined on the $ S$-spectrum but it generates a monogenic functional calculus in the spirit of McIntosh and collaborators, see \cite{J, JM}. Let $T=T_0+ T_1e_1+T_2e_2+T_3e_3 \in\mathcal{BC}(X)$, assume that the operators $T_{\ell}$, $\ell=0,1,2,3$ have real spectrum and set $ds_J=ds/J$, where $J\in \mathbb{S}$.
For $ \breve{f}=\Delta f$ with $f\in \mathcal{SH}_L(W)$ (resp. $f\in \mathcal{SH}_R(W)$), we define
\begin{equation}\label{DefFCLUb}
	\breve{f}(T):=\frac{1}{2\pi}\int_{\pp(U\cap \mathbb{C}_J)} F_L(s,T) \, ds_J\, f(s) \qquad \left( \hbox{resp.} 	\breve{f}(T):=\frac{1}{2\pi}\int_{\pp(U\cap \mathbb{C}_J)} f(s) \, ds_J\, F_R(s,T)\right),
\end{equation}
where 
\begin{equation}
\label{fres}
F_L(s,T):=-4 (s-\bar T)\mathcal Q_{c,s}(T)^{-2} \quad \hbox{and} \quad F_R(s,T):=-4 \mathcal Q_{c,s}(T)^{-2} (s-\bar T)
\end{equation}
are called respectively  the left and the right $\mathcal{F}$-resolvent operators.
The definition of the $\mathcal{F}$-functional calculus is well posed, see \cite[Thm. 7.1.12]{CGKBOOK}.

\section{Function spaces of axial type in the quaternionic setting}
In \cite{CDPS} the authors gave the following

\begin{definition}[Fine structure of slice hyperholomorphic spectral theory]
A fine structure of a slice hyperholomorphic spectral theory is the set of functions spaces and the associated functional calculi induced by a factorization of the operator $ \Delta$.
\end{definition}
In the quaternionic case only two fine structures are possible. One of them is studied in \cite{CDPS}, and the other one is the main topic of this paper.  
\\ The first fine structure studied corresponds to the factorization $ \Delta= \mathcal{D} \mathcal{\overline{D}}$. In that case, we have the following diagram   
\begin{equation}
\label{fine1}
\mathcal{O}(D) \overset{T_{F}}{\longrightarrow} \mathcal{SH}(\Omega_D)\overset{\mathcal{D}}{\longrightarrow} \mathcal{AH}(\Omega_D)\overset{\mathcal{\overline{D}}}{\longrightarrow}\mathcal{AM}(\Omega_D),
\end{equation}
where the $ \mathcal{AH}(\Omega_D)$ is the set of axially harmonic functions and $ \Omega_D$ is defined as in Theorem \ref{FS}.
\\ The aim of this paper is to study the fine structure which corresponds to the other possible factorization of the Laplacian, $\Delta= \mathcal{\overline{D}} \mathcal{D}$. To this end, we need the following splitting of the Fueter theorem (see \cite{F}).
\begin{theorem}
\label{FS}
Let $f_{0}(z)= \alpha(u,v)+i \beta(u,v)$ be a holomorphic function defined in a domain (open and connected) $D$ in the upper-half complex plane and let
$$ \Omega_D=\{q=q_0+q_{1}e_1+q_{2}e_{2}+q_3 e_{3} \,\  |\  \ (q_0, |\underline{q}|) \in D\}$$
be the open set induced by $D$ in $\mathbb{H}$. The map
$$ f(q)= T_{F}(f_0):= \alpha(q_0, |\underline{q}|)+ \frac{\underline{q}}{|\underline{q}|}\beta(q_0, |\underline{q}|)$$
takes the holomorphic function $f_0(z)$ and gives the intrinsic slice hyperholomorphic function $f$ induced by $f_0$. Then the function
$$ \breve f^0(q):= \mathcal{\overline{D}} \left(\alpha(q_0, | \underline{q}|)+ \frac{\underline{q}}{| \underline{q}|} \beta(q_0, | \underline{q}|)\right),$$
is in the kernel of $ \mathcal{D}^2$, i.e.
$$ \mathcal{D}^2 \breve{f}^0=0 \qquad \hbox{on} \quad \Omega_D.$$
Moreover,
$$ \breve{f}(q)= \mathcal{D} \breve{f}^0(q),$$
is axially monogenic.
\end{theorem}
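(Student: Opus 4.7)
The plan is to reduce the three assertions to (i) the classical Fueter mapping theorem (which the splitting here extends) and (ii) the commutativity of the Cauchy--Fueter operator with its conjugate, using only the elementary identity $\mathcal{D}\overline{\mathcal{D}}=\overline{\mathcal{D}}\mathcal{D}=\Delta$ on $C^2$ functions.

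\textbf{Step 1: the slice extension.} First I would recall (or verify directly) that $f(q)=T_F(f_0)(q)=\alpha(q_0,|\underline q|)+\tfrac{\underline q}{|\underline q|}\beta(q_0,|\underline q|)$ is intrinsic left slice hyperholomorphic on $\Omega_D$. The Cauchy--Riemann system for $f_0$ together with the parities $\alpha(u,v)=\alpha(u,-v)$ and $\beta(u,v)=-\beta(u,-v)$ (inherited from holomorphicity of $f_0$ on the upper half-plane and extended by the mapping) yield the slice Cauchy--Riemann relations for the pair $(\alpha,\beta)$ regarded as functions on the symmetrization $\mathcal{U}$. The oddness of $\beta$ in $v$ also guarantees that $\beta/|\underline q|$ extends smoothly across the real axis, so $f$ is $C^\infty$ on $\Omega_D$.

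\textbf{Step 2: explicit computation of $\overline{\mathcal{D}}f$.} Write $r=|\underline q|$ and compute term by term. A direct calculation using $\partial_{q_i}r=q_i/r$, $\sum_i e_i^2=-3$, and $\underline q^{\,2}=-r^2$ gives
\begin{equation*}
\overline{\mathcal{D}}f=\bigl(\alpha_u+\beta_v+\tfrac{2\beta}{r}\bigr)+\tfrac{\underline q}{r}\bigl(\beta_u-\alpha_v\bigr).
\end{equation*}
Applying the Cauchy--Riemann system $\alpha_u=\beta_v$, $\alpha_v=-\beta_u$ this simplifies to
\begin{equation*}
\breve f^{\,0}(q)=\overline{\mathcal{D}}f(q)=2\bigl(\alpha_u(q_0,r)+\tfrac{\beta(q_0,r)}{r}\bigr)+\tfrac{\underline q}{r}\,2\beta_u(q_0,r),
\end{equation*}
which is an axial function on $\Omega_D$ of the form $A(q_0,r)+\tfrac{\underline q}{r}B(q_0,r)$ (the oddness of $\beta$ in its second variable ensures $\beta/r$ is smooth at $r=0$). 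This records that $\breve f^{\,0}$ actually belongs to the \emph{axial} function class the paper wants.

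\textbf{Step 3: the kernel identities via operator commutativity.} Since all partials $\partial_{q_0},\ldots,\partial_{q_3}$ commute and the $e_i$ are constants, $\mathcal{D}$ and $\overline{\mathcal{D}}$ commute, and $\mathcal{D}\overline{\mathcal{D}}=\overline{\mathcal{D}}\mathcal{D}=\Delta$ in the four real variables. Therefore
\begin{equation*}
\mathcal{D}^2\breve f^{\,0}=\mathcal{D}^2\overline{\mathcal{D}}f=\mathcal{D}\bigl(\mathcal{D}\overline{\mathcal{D}}f\bigr)=\mathcal{D}(\Delta f),\qquad \mathcal{D}\breve f^{\,0}=\mathcal{D}\overline{\mathcal{D}}f=\Delta f.
\end{equation*}
By the classical Fueter theorem $\Delta f$ is axially monogenic on $\Omega_D$, i.e. $\Delta f$ is of axial form and $\mathcal{D}(\Delta f)=0$. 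The first identity above then gives $\mathcal{D}^2\breve f^{\,0}=0$, and the second identifies $\breve f=\mathcal{D}\breve f^{\,0}$ with $\Delta f$, hence $\breve f\in\mathcal{AM}(\Omega_D)$.

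\textbf{Expected difficulties.} The operator-level step is trivial once the commutativity is acknowledged, so the only real work is Step~2: the bookkeeping of $\overline{\mathcal{D}}$ acting on a function of $(q_0,r,\underline q/r)$ and the verification that what results is axial and smooth on $\Omega_D$, including across the real axis. The smoothness at $r=0$ is not automatic from the formula and must be read off the parity of $\beta$ in $v$ (and of $\alpha_u$, $\beta_u$), so that is where care is needed. Everything else reduces to invoking the classical Fueter mapping theorem, which is already available as a black box in the excerpt.
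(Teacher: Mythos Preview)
The paper does not actually supply a proof of Theorem~\ref{FS}; it is stated as a ``splitting of the Fueter theorem'' with a reference to \cite{F} and then immediately used. So there is no paper proof to compare against in the strict sense.

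Your argument is correct and is exactly the natural way to justify the statement: the only substantive input is the classical Fueter theorem ($\Delta f\in\mathcal{AM}(\Omega_D)$), and everything else follows from the commutation $\mathcal{D}\overline{\mathcal{D}}=\overline{\mathcal{D}}\mathcal{D}=\Delta$. Step~3 is the heart of the matter and is airtight. Step~2 is a useful sanity check that $\breve f^{\,0}$ is of axial type, though strictly speaking it is not needed for the two kernel assertions in the theorem (only for the subsequent definition of the class $\mathcal{AP}_2$). One small caveat on the smoothness discussion: the domain $D$ lies in the \emph{upper} half-plane, so $\Omega_D$ as defined does not meet the real axis and the issue of extending $\beta/r$ across $r=0$ does not actually arise here; you can drop that remark.
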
 
From the previous theorem we have the following diagram
\begin{equation}
\label{fine2}
\mathcal{O}(D) \overset{T_{F}}{\longrightarrow} \mathcal{SH}(\Omega_D)\overset{\mathcal{\overline{D}}}{\longrightarrow} \mathcal{AP}_2(\Omega_D)\overset{\mathcal{D}}{\longrightarrow}\mathcal{AM}(\Omega_D),
\end{equation}
where $\mathcal{AP}_2(\Omega_D)$ is the set of axially polyanalytic functions of order 2. Now,  we give a rigorous definition of this set
\begin{definition}[Axially polyanalytic function of order 2]
Let $U \subseteq \mathbb{H}$ be an axially symmetric open set not intersecting the real line, and let
$$
\mathcal{U} =\{ (u,v)\in \mathbb{R}\times\mathbb{R}^+ \ | \ u+\mathbb{S}v \in U\}.
$$
Let $f: U \to \mathbb{H}$ be a function, of class $\mathcal{C}^3$, of the form
$$ f(q)=\alpha(u,v)+J\beta(u,v), \quad q=u+Jv, \quad J \in \mathbb{S},$$
where $\alpha$ and $\beta$ are quaternionic-valued functions.
More in general, let $U \subseteq \mathbb{H}$ be an axially symmetric open set and let
$$
\mathcal{U} =\{ (u,v)\in \mathbb{R}^2 \ | \ u+\mathbb{S}v \in U\},
$$
and assume that
\begin{equation}\label{eveoddll}
	\alpha(u,v)=\alpha(u,-v), \ \ \ \  \beta(u,v)=-\beta(u,-v) \ \\ \ \  \hbox{for all} \, \, (u,v) \in \mathcal{U}.
\end{equation}
Let us set

$$ \breve{f}^0(q):= \mathcal{\overline{D}} f(q) \qquad \hbox{for} \quad q \in U.$$
If 
$$ \mathcal{D}^2 \breve{f}^0(q)=0, \quad \hbox{for} \quad q \in U,$$
we say that $\breve{f}^0$ is axially polyanalytic of order 2.

\end{definition}
It is possible to write a polyanalytic function as a sum of axially monogenic functions, see \cite{B1976}. Specifically, we can write the so called polyanalytic decomposition as
\begin{equation}
\label{In1}
\breve{f}^0(q)=  \breve{f}_0(q)+q_0 \breve{f}_1(q),
\end{equation}
where the $ \breve{f}_0(q)$ and $ \breve{f}_1(q)$ are axially monogenic functions.  
\\As well as the monogenic functions satisfy a system of differential equations, called Vekua systems \cite{CS0} also the axially polyanalytic functions of order 2 satisfy a system of differential equations, but of order two. The following result will be investigated in a forthcoming work.

\begin{theorem}
Let $U$ be an axially symmetric open set in $ \mathbb{H}$, not intersecting the real line, and let $ \breve{f}^0(q)= A(q_0,r)+ \underline{\omega} B(q_0,r)$ be an axially polyanalytic function of order 2 on $U$, $r>0$ and $\underline{\omega} \in \mathbb{S}$. Then the functions $A=A(q_0,r)$ and $B=B(q_0,r)$ satisfy the following system
$$
\begin{cases}
\partial_{x_0}^2 A-2 \partial_{x_0} \partial_r B- \frac{4}{r} \partial_{x_0} B- \partial_{r}^2 A- \frac{2}{r} \partial_r A=0\\
\partial_{x_0}^2 B+2 \partial_{x_0}\partial_{r} A- \partial_r^2B-2 \frac{r\partial_r B-B}{r^2}=0.
\end{cases} 
$$
\end{theorem}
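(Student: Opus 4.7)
The plan is to compute $\mathcal{D}^2 \breve f^0$ directly in coordinates adapted to axial symmetry and to read off the two equations of the system as the two independent axial components of the resulting identity. Writing $r = |\underline q|$ and $\underline\omega = \underline q/r \in \mathbb{S}$, so that $\breve f^0 = A + \underline\omega B$ with $A, B$ depending only on the invariants $(q_0, r)$, the whole proof reduces to applying $\mathcal D$ twice to a function of this axial form.

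The first step is to establish the key intermediate formula
\[
\mathcal D\bigl(A + \underline\omega B\bigr) = \Bigl(\partial_{q_0} A - \partial_r B - \tfrac{2B}{r}\Bigr) + \underline\omega\bigl(\partial_r A + \partial_{q_0} B\bigr),
\]
showing that this class of axially symmetric functions is stable under $\mathcal D$. The derivation rests on three elementary identities in $\mathbb H$: first, $\sum_{i=1}^3 e_i \partial_{q_i} g = \underline\omega\,\partial_r g$ for any function $g(q_0,r)$, since $\partial_{q_i} r = q_i/r$; second, $\sum_i e_i \partial_{q_i}\underline\omega = -2/r$, which follows from $\partial_{q_i}\underline\omega = e_i/r - q_i\underline q/r^3$ together with $\sum_i e_i^2 = -3$ and $\underline q^{\,2} = -r^2$; and third, $\sum_i (q_i/r)\, e_i \underline\omega = \underline\omega^2 = -1$. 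Tracking carefully the non-commutativity between the $e_i$ and $\underline\omega$, these combine to yield the displayed formula.

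The second step is to apply the intermediate formula to $\mathcal D\breve f^0$. Setting $\tilde A := \partial_{q_0} A - \partial_r B - 2B/r$ and $\tilde B := \partial_r A + \partial_{q_0} B$, a second application gives
\[
\mathcal D^2 \breve f^0 = \Bigl(\partial_{q_0}\tilde A - \partial_r \tilde B - \tfrac{2\tilde B}{r}\Bigr) + \underline\omega\bigl(\partial_r \tilde A + \partial_{q_0}\tilde B\bigr).
\]
Since $A, B$, and hence $\tilde A, \tilde B$, depend only on $(q_0, r)$ while $\underline\omega$ varies freely over $\mathbb S$ on each axially symmetric orbit, the identity $\mathcal D^2\breve f^0 = 0$ forces both axial components to vanish separately. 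Expanding $\tilde A$ and $\tilde B$, using equality of mixed partials $\partial_{q_0}\partial_r = \partial_r\partial_{q_0}$, and computing $\partial_r(2B/r) = (2/r)\partial_r B - 2B/r^2$, the scalar part collapses to the first equation of the system (the two copies of $-(2/r)\partial_{q_0} B$ yielding the $-(4/r)\partial_{q_0} B$ term), while the $\underline\omega$-component collapses to the second equation, with the $-2(r\partial_r B - B)/r^2$ term coming precisely from $\partial_r(2B/r)$.

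The main point requiring care is not conceptual but purely a matter of bookkeeping: handling the non-commutativity between $e_i$ and $\underline\omega$ in the term $\mathcal D(\underline\omega B)$ is what produces the radial correction $-2B/r$ in the intermediate formula, and this correction is in turn responsible for the distinctive lower-order terms $-(4/r)\partial_{q_0} B$ and $-2(r\partial_r B - B)/r^2$ that distinguish this polyanalytic system from the standard Vekua system for axially monogenic functions. The parity assumptions \eqref{eveoddll} on $\alpha,\beta$ are not needed in the derivation, which is entirely local in $r > 0$.
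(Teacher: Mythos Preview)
The paper does not actually prove this theorem: immediately before its statement it says ``The following result will be investigated in a forthcoming work,'' and no proof is given. So there is no argument in the paper to compare your proposal against.

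That said, your argument is correct and is precisely the natural approach. The intermediate formula
\[
\mathcal D\bigl(A + \underline\omega B\bigr) = \Bigl(\partial_{q_0} A - \partial_r B - \tfrac{2B}{r}\Bigr) + \underline\omega\bigl(\partial_r A + \partial_{q_0} B\bigr)
\]
is verified exactly as you describe, the three auxiliary identities are standard and correctly stated, and the second application together with the expansion of $\tilde A,\tilde B$ gives the claimed system. The separation into scalar and $\underline\omega$ components is justified because, on an axially symmetric set, one may evaluate at $q_0 + r\underline\omega$ and at $q_0 - r\underline\omega$ with the same $(q_0,r)$; adding and subtracting forces both components to vanish individually, and this works even for quaternion-valued $A,B$. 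Your remark that the derivation is local in $r>0$ and does not use the parity conditions is also accurate.
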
  
\begin{remark}
A similar system of the same order holds for the axially harmonic functions, see \cite{CDPS}.
\end{remark}
In conclusion, even if $ \Delta= \mathcal{D} \mathcal{\overline{D}}= \mathcal{\overline{D}} \mathcal{D}$ the application of $\mathcal{D}$ or the operator $ \mathcal{\overline{D}}$ to the set of slice hyperholomorphic functions gives arise to two completely different fine structures.

\section{Integral representation of polyanalytic functions of order 2}
In this section we  show how to write a polyanalytic function of order 2  in integral form. The main advantage of this approach is that it is enough to compute an integral of slice hyperholomorphic functions  in order to get a polyanalytic function of order 2. The crucial point to show the integral representation is to apply the operator $ \mathcal{\overline{D}}$ to the slice hyperholomorphic Cauchy kernels.

\begin{theorem}
\label{res2}
Let $s$, $q \in \mathbb{H}$, be such that $s \notin [q]$ then
\begin{equation}
\label{leftdeco}
\mathcal{\overline{D}}S^{-1}_L(s,q)=- F_{L}(s,q)s+ q_0 F_{L}(s,q)= \sum_{k=0}^{1} q_0^k F_L(s,q) (-1)^{k+1} s^{1-k},
\end{equation}
and
\begin{equation}
\label{rightdeco}
S^{-1}_R(s,q)\mathcal{\overline{D}}=- sF_{R}(s,q)+ q_0 F_{R}(s,q)= \sum_{k=0}^{1} q_0^k s^{1-k}F_R(s,q) (-1)^{k+1} .
\end{equation}
\end{theorem}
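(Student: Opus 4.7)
The plan is to verify \eqref{leftdeco} by direct differentiation of the explicit formula $S^{-1}_L(s,q)=(s-\bar q)\,\Q_{c,s}(q)^{-1}$, where I write $\Q_{c,s}(q):=s^{2}-2q_{0}s+|q|^{2}$. A key observation that simplifies the bookkeeping is that $\Q_{c,s}(q)$ is a polynomial in $s$ with \emph{real} coefficients depending only on $q_{0}$ and $|q|^{2}$, so it commutes with $s$ and with each of its own partial derivatives; hence the identity $\partial_{q_{i}}\Q_{c,s}(q)^{-1}=-\Q_{c,s}(q)^{-2}\partial_{q_{i}}\Q_{c,s}(q)$ holds without ordering issues.

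First I would compute the four partial derivatives: $\partial_{q_{0}}(s-\bar q)=-1$, $\partial_{q_{i}}(s-\bar q)=e_{i}$, $\partial_{q_{0}}\Q_{c,s}(q)^{-1}=(2s-2q_{0})\Q_{c,s}(q)^{-2}$ and $\partial_{q_{i}}\Q_{c,s}(q)^{-1}=-2q_{i}\Q_{c,s}(q)^{-2}$. Applying Leibniz and then the definition $\overline\bigD=\partial_{q_{0}}-\sum_{i=1}^{3}e_{i}\partial_{q_{i}}$, together with $\sum_{i=1}^{3}e_{i}^{2}=-3$ and $\sum_{i=1}^{3}q_{i}e_{i}=\underline{q}$, I get
\[
\overline\bigD S^{-1}_{L}(s,q)=2\,\Q_{c,s}(q)^{-1}+2\bigl[(s-\bar q)(s-q_{0})+\underline{q}(s-\bar q)\bigr]\Q_{c,s}(q)^{-2}.
\]

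The core algebraic step is then to show $\Q_{c,s}(q)+\underline{q}(s-\bar q)=(s-\bar q)(s-q_{0})$. Using $\bar q=q_{0}-\underline{q}$ and $|q|^{2}=q_{0}^{2}+|\underline{q}|^{2}$, together with $\underline{q}^{2}=-|\underline{q}|^{2}$, both sides reduce to $s^{2}-2q_{0}s+\underline{q}s-q_{0}\underline{q}+q_{0}^{2}$. Substituting this identity into the displayed expression collapses the $\Q_{c,s}(q)^{-1}$ term and yields
\[
\overline\bigD S^{-1}_{L}(s,q)=4(s-\bar q)(s-q_{0})\,\Q_{c,s}(q)^{-2}.
\]
Since $\Q_{c,s}(q)^{-2}$ commutes with $s$ and $q_{0}\in\rr$ commutes with everything, one can rewrite this as $-F_{L}(s,q)s+q_{0}F_{L}(s,q)$, which is exactly the polyanalytic expansion $\sum_{k=0}^{1}q_{0}^{k}F_{L}(s,q)(-1)^{k+1}s^{1-k}$ of \eqref{leftdeco}.

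For the right identity \eqref{rightdeco} I would repeat the same scheme with $\overline\bigD$ acting on the right: $S^{-1}_{R}\overline\bigD=\partial_{q_{0}}S^{-1}_{R}-\sum_{i=1}^{3}(\partial_{q_{i}}S^{-1}_{R})e_{i}$. Because $S^{-1}_{R}(s,q)=\Q_{c,s}(q)^{-1}(s-\bar q)$ is just the opposite-order product of the same two factors, the computation mirrors the left case and produces $4\Q_{c,s}(q)^{-2}(s-q_{0})(s-\bar q)=-sF_{R}(s,q)+q_{0}F_{R}(s,q)$. The only real obstacle throughout is the non-commutativity of $s$ with $\bar q$ and $\underline{q}$; this is handled by carefully keeping multiplication orders and exploiting that $\Q_{c,s}(q)$, $q_{0}$ and $|q|^{2}$ each commute with $s$, which is what makes the displayed simplifications legitimate.
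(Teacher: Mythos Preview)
Your proof is correct and follows essentially the same route as the paper: both compute the partial derivatives of $S^{-1}_L(s,q)$ directly and then reduce via a quaternionic identity. The paper quotes the known relation $F_L(s,q)s-qF_L(s,q)=-4\,\Q_{c,s}(q)^{-1}$, which unwinds to $(s-\bar q)s-q(s-\bar q)=\Q_{c,s}(q)$; your identity $\Q_{c,s}(q)+\underline{q}(s-\bar q)=(s-\bar q)(s-q_0)$ is exactly the same equation after writing $q=q_0+\underline{q}$, so the two arguments differ only in bookkeeping.
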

\begin{proof}
We start by applying the derivative with respect to $ \partial_{q_0}$ to the left slice hyperholomorphic Cauchy kernel
\begin{equation}
\label{comp1}
\partial_{q_0} S^{-1}_L(s,q)=- \mathcal{Q}_{c,s}(q)^{-1}+ \frac{q_0}{2} F_L(s,q)- \frac{1}{2} F_L(s,q)s.
\end{equation}
Now, we make the derivative with respect to $ \partial_{q_i}$,
\begin{equation}
\label{comp2}
\partial_{q_i} S^{-1}_L(s,q)= e_i \mathcal{Q}_{c,s}(q)^{-1}+ \frac{q_i}{2} F_L(s,q), \qquad i=1, 2,3. 
\end{equation}
Formula \eqref{comp1} and \eqref{comp2} imply that
\begin{eqnarray*}
\mathcal{\overline{D}} S^{-1}_L(s,q)&=&\left(\partial_{q_0}- \sum_{i=1}^3 e_{i} \partial_{q_i}\right)S^{-1}_L(s,q)=2\mathcal{Q}_{c,s}(q)^{-1}+ \frac{q_0}{2} F_L(s,q)- \frac{1}{2} F_L(s,q)s- \frac{\underline{q}}{2} F_L(s,q)\\
&=& 2 \mathcal{Q}_{c,s}(q)^{-1}+ \frac{\bar{q}}{2} F_L(s,q)- \frac{1}{2} F_L(s,q) s.
\end{eqnarray*}
From the equality $F_{L}(s,q)s-q F_L(s,q)=-4 \mathcal{Q}_{c,s}(q)^{-1}$ it follows the thesis.
By similar computations we obtain formula \eqref{rightdeco}.
\end{proof}
Now, we study the regularities of $ \mathcal{\overline D} S_{L}^{-1}(s,q)$ and $S^{-1}_R(s,q)\mathcal{\overline{D}}$ both in $s$ and in $q$.
\begin{proposition}
\label{sh}
Let $s$, $q \in \mathbb{H}$, be such that $s \notin [q]$. The function $ \mathcal{\overline{D}}S^{-1}_L(s,q)$ is a right slice hyperholomorphic function in the variable $s$, while $S^{-1}_R(s,q)\mathcal{\overline{D}}$ is left slice hyperholomorphic in the variable $s$.
\end{proposition}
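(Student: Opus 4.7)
The plan is to reduce the proposition to facts that are already known from the literature and to one small computational verification. The key observation is that Theorem \ref{res2} has already written the two objects $\overline{\mathcal{D}}S^{-1}_L(s,q)$ and $S^{-1}_R(s,q)\overline{\mathcal{D}}$ as explicit sums in which $s$ only enters through the $\mathcal{F}$-kernels and through the factor $s$ itself. So the proof amounts to explaining why each term in that decomposition is of the correct type in $s$.

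First I would recall from the preliminaries, specifically from \cite[Prop.~2.2.4]{CGKBOOK} already cited in the introduction, that on the set $\{(s,q)\in \mathbb{H}^2:\, s\notin[q]\}$ the kernel $F_L(s,q)$ is right slice hyperholomorphic in $s$, while $F_R(s,q)$ is left slice hyperholomorphic in $s$. This is the only ``deep'' input; everything else is stability of the two function classes under elementary operations.

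Second, I would invoke the two elementary stability properties that are standard for slice hyperholomorphic functions: right slice hyperholomorphicity in $s$ is preserved under right multiplication by the variable $s$ (equivalently, under pointwise multiplication by the intrinsic slice hyperholomorphic function $s\mapsto s$) and under left multiplication by any quaternion-valued constant (in particular by the real parameter $q_0$, with $q$ regarded as fixed). The symmetric statements hold for left slice hyperholomorphicity. If these stability facts are felt to need justification in the text, I would verify them by writing a right slice hyperholomorphic $f(s)=\alpha(u,v)+\beta(u,v)J$ on $\mathbb{C}_J$ and checking directly that $(\alpha u-\beta v)+(\alpha v+\beta u)J$ still satisfies the Cauchy--Riemann--Fueter system; this is a short calculation using $\partial_u\alpha=\partial_v\beta$ and $\partial_v\alpha=-\partial_u\beta$.

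Third, combining these two items with the formulas
\[
\overline{\mathcal{D}}S^{-1}_L(s,q)=-F_L(s,q)s+q_0F_L(s,q),\qquad S^{-1}_R(s,q)\overline{\mathcal{D}}=-sF_R(s,q)+q_0F_R(s,q)
\]
from Theorem \ref{res2} yields the conclusion by linearity of the class of right (resp.\ left) slice hyperholomorphic functions in $s$. I expect the main (minor) obstacle to be making sure that the ordering of factors is handled carefully: for the left expression one needs $F_L(s,q)\cdot s$ to remain right slice hyperholomorphic (multiplication on the right by the variable), and for the right expression one needs $s\cdot F_R(s,q)$ to remain left slice hyperholomorphic. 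Once this orientation is respected, the two stability lemmas apply cleanly and the proposition follows.
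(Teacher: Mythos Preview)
Your proposal is correct and follows essentially the same approach as the paper: both invoke the decomposition from Theorem~\ref{res2} and argue term by term that each summand is right (resp.\ left) slice hyperholomorphic in $s$, using the known regularity of the $\mathcal{F}$-kernels. The only cosmetic difference is that the paper's proof refers to the intermediate three-term expression $2\mathcal{Q}_{c,s}(q)^{-1}+\tfrac{\bar q}{2}F_L(s,q)-\tfrac{1}{2}F_L(s,q)s$ appearing in the proof of Theorem~\ref{res2}, whereas you use the final two-term formula $-F_L(s,q)s+q_0F_L(s,q)$ from its statement; your version is slightly cleaner since $q_0$ is real and the stability under left multiplication is then trivial.
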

\begin{proof}
By Theorem \ref{res2} we know that $ \mathcal{\overline{D}}S^{-1}_{L}(s,q)$ is a sum of right slice hyperholomorphic functions in the variable $s$. Indeed $ \mathcal{Q}_{c,s}(q)^{-1}$ is a right slice hyperholomorphic function as well as $ \bar{q} F_L(s,q)$ and $F_L(s,q)s$. The left slice hyperholomorphicity of the function $S^{-1}_R(s,T)\mathcal{\overline{D}}$ follows by similar arguments.
\end{proof}

\begin{proposition}
\label{regu2}
Let $s$, $q \in \mathbb{H}$, be such that $s \notin [q]$. The function $ \mathcal{D}S^{-1}_L(s,q)$ is left polyanalytic of order 2 and $S^{-1}_R(s,T)\mathcal{\overline{D}}$ is right polyanalytic of order 2.
\end{proposition}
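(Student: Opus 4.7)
The plan is to reduce the statement to the known left/right axial monogenicity of the $\mathcal{F}$-kernels by invoking the decomposition obtained in Theorem \ref{res2}. By that theorem,
$$\mathcal{\overline{D}}\,S^{-1}_L(s,q) \;=\; q_0\,F_L(s,q) - F_L(s,q)\,s,$$
and, as recalled in Section~2 in the discussion of the Fueter theorem in integral form, $F_L(s,q) = \Delta S^{-1}_L(s,q)$ is left axially monogenic in the variable $q$, i.e.\ $\mathcal{D}F_L(s,q)=0$ for $s\notin[q]$.

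First I would dispose of the term $F_L(s,q)\,s$. Since $\mathcal{D}$ acts on $q$ from the left (its coefficients $e_i$ sit to the left in $\mathcal{D}=\partial_{q_0}+\sum_{i=1}^{3}e_i\partial_{q_i}$) while $s$ multiplies $F_L(s,q)$ from the right, the two actions commute and hence $\mathcal{D}[F_L(s,q)\,s]=[\mathcal{D}F_L(s,q)]\,s=0$. Next, I would apply the Leibniz rule to $q_0\,F_L(s,q)$; since $q_0\in\mathbb R$ commutes past each $e_i$ and only $\partial_{q_0}$ differentiates $q_0$ nontrivially, one gets
$$\mathcal{D}\bigl[q_0\,F_L(s,q)\bigr] \;=\; F_L(s,q) + q_0\,\mathcal{D}F_L(s,q) \;=\; F_L(s,q).$$
Combining, $\mathcal{D}\bigl[\mathcal{\overline{D}}\,S^{-1}_L(s,q)\bigr]=F_L(s,q)$. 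A second application of $\mathcal{D}$ and a second use of the monogenicity of $F_L$ then yield $\mathcal{D}^{2}\bigl[\mathcal{\overline{D}}\,S^{-1}_L(s,q)\bigr]=0$, which is exactly left polyanalyticity of order $2$ in $q$.

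For the right-slice assertion I would repeat the argument with $\mathcal{D}$ acting from the right and with the decomposition $S^{-1}_R(s,q)\mathcal{\overline{D}} = q_0F_R(s,q)-sF_R(s,q)$ from Theorem \ref{res2}: now the scalar $s$ multiplies $F_R(s,q)$ from the left, hence commutes with the right action of $\mathcal{D}$ on $q$, and $F_R(s,q)$ is right axially monogenic in $q$ (again from the Fueter theorem in integral form). The analogous two-line Leibniz computation collapses $q_0F_R(s,q)$ to $F_R(s,q)$ after one application of $\mathcal{D}$, and right monogenicity closes the argument.

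The computations themselves are short once Theorem \ref{res2} is in hand; the only genuine subtlety is the non-commutative Leibniz rule. The main thing to watch is that the ``extra'' factors appearing in Theorem \ref{res2}, namely $q_0$ and $s$, are either real or placed on the opposite side from where $\mathcal{D}$ acts, so that none of the imaginary units of $\mathbb H$ have to be pushed past them. This is what makes the cancellation clean and reduces everything to the monogenicity of the $\mathcal{F}$-kernels.
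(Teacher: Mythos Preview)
Your argument is correct, but it is not the route the paper takes. The paper's proof is a one-liner that bypasses Theorem~\ref{res2} entirely: since $\Delta=\mathcal{D}\mathcal{\overline{D}}$ has real scalar coefficients, it commutes with $\mathcal{D}$, so
\[
\mathcal{D}^{2}\bigl(\mathcal{\overline{D}}S^{-1}_L(s,q)\bigr)=\mathcal{D}\,\Delta S^{-1}_L(s,q)=\mathcal{D}F_L(s,q)=0,
\]
and the monogenicity of $F_L$ finishes immediately. Your approach instead feeds the explicit decomposition $\mathcal{\overline{D}}S^{-1}_L=q_0F_L-F_Ls$ through a Leibniz computation; this is longer but yields the extra identity $\mathcal{D}\bigl(\mathcal{\overline{D}}S^{-1}_L\bigr)=F_L$ along the way and makes transparent why the decomposition in Theorem~\ref{res2} really is the polyanalytic decomposition of the kernel. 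The paper's route is slicker; yours is more informative.
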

\begin{proof}
It follows from the fact that the function $ F_L(s,q)$ is axially monogenic in the variable $q$ and the Laplace operator is a real operator, thus it can commute with other operators. Therefore, we get
$$ \mathcal{D}^2 \left(\mathcal{\overline{D}} S^{-1}_L(s,q)\right)= \mathcal{D} \Delta S^{-1}_L(s,q)= \mathcal{D} F_L(s,q)=0.$$
The right polyanalyticity of $S^{-1}_R(s,T)\mathcal{\overline{D}}$ follows similarly.
\end{proof}

The expressions obtained in Theorem \ref{res2} can be considered a polyanalytic decomposition of $ \mathcal{\overline{D}}S^{-1}_L(s,q)$ and $ S^{-1}_R(s,q)\mathcal{\overline{D}}$, respectively, see formula \eqref{In1}. Indeed the functions $-F_L(s,q)s$ and $F_L(s,q)$ are left axially monogenic in the variable $q$. Similarly, the functions $-sF_R(s,q)$ and $F_R(s,q)$ are right axially monogenic in the variable $q$.
\newline
\newline
Now, we have all what we need to write an axially polyanalytic function of order 2 as an integral formula. This will be fundamental to define the polyanalytic functional calculus of order 2 based on the $S$-spectrum.
\begin{theorem}[Integral representation of axially polyanalytic functions of order $2$]
\label{inrap}
Let $W\subset \hh$ be an open set. Let $U$ be a slice Cauchy domain such that $\overline U\subset W$. Then for $J\in\mathbb S$ and $ds_J=ds(-J)$ we have 
\begin{enumerate}
\item if $f\in \mathcal{SH}_L(W)$, then the function $\breve f^0(q)=\overline\bigD f(q)$ is polyanalytic of order $2$ and it admits the following integral representation 
\begin{equation}
\label{star}
\breve f^0(q)=-\frac 1{2\pi}\sum_{k=0}^1(-q_0)^k\int_{\partial(U\cap\cc_J)} F_L(s,q) s^{1-k} \, ds_J\, f(s)\quad \forall q\in U;
\end{equation}
\item if $f\in \mathcal{SH}_R(W)$, then the function $\breve f^0(q)= f(q) \overline\bigD$ is polyanalytic of order $2$ and it admits the following integral representation  
\begin{equation}
\label{start2}
\breve f^0(q)=-\frac 1{2\pi}\sum_{k=0}^1(-q_0)^k\int_{\partial(U\cap\cc_J)} f(s) \, ds_J\,  s^{1-k} F_R(s,q)\quad \forall q\in U.
\end{equation}
\end{enumerate}
The integrals depend neither on $U$ nor on the imaginary unit $J\in U$.
\end{theorem}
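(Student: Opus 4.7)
The plan is to start from the slice hyperholomorphic Cauchy formula \eqref{cauchy} and push the operator $\overline{\mathcal{D}}$ inside the integral sign, then invoke Theorem \ref{res2} to rewrite $\overline{\mathcal{D}}S_L^{-1}(s,q)$ in closed form. Concretely, for $q\in U$ and $f\in \mathcal{SH}_L(W)$ I would write
\begin{equation*}
\overline{\mathcal{D}} f(q)=\overline{\mathcal{D}}\left(\frac 1{2\pi}\int_{\partial(U\cap \cc_J)} S_L^{-1}(s,q)\, ds_J\, f(s)\right)=\frac 1{2\pi}\int_{\partial(U\cap \cc_J)} \overline{\mathcal{D}}\bigl(S_L^{-1}(s,q)\bigr)\, ds_J\, f(s),
\end{equation*}
where the differentiation under the integral sign is justified because $s\mapsto S_L^{-1}(s,q)$ and its $q$-derivatives of all orders are continuous on the compact contour $\partial(U\cap \cc_J)$, since $s\notin [q]$ for $q\in U$.

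Next, substituting the identity \eqref{leftdeco} of Theorem \ref{res2}, namely $\overline{\mathcal{D}}S_L^{-1}(s,q)=\sum_{k=0}^{1}q_0^{k}F_L(s,q)(-1)^{k+1}s^{1-k}$, the sum is pulled out of the integral (noting that $q_0^k$ is real and commutes with everything) and I use the elementary identity $(-1)^{k+1}q_0^k=-(-q_0)^k$ to obtain exactly the claimed expression \eqref{star}. The proof of the polyanalyticity of order $2$ of $\breve f^0$ then follows at once, because the integrand is polyanalytic of order $2$ in $q$ by Proposition \ref{regu2}, and $\mathcal{D}^2$ commutes with the integral (again by smoothness of the kernel in $q$), so $\mathcal{D}^2\breve f^0\equiv 0$ on $U$.

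For the independence statement, I would observe that each kernel $F_L(s,q)s^{1-k}$, $k=0,1$, is right slice hyperholomorphic in $s$ on $\mathbb{H}\setminus[q]$: indeed $F_L(s,q)$ has this property (it is the right-slice-hyperholomorphic $\mathcal{F}$-kernel in $s$), and right multiplication by the intrinsic function $s^{1-k}$ preserves right slice hyperholomorphicity. Hence each of the two integrals in \eqref{star} is a standard slice hyperholomorphic contour integral, and Cauchy's theorem for slice hyperholomorphic functions yields invariance under deformations of $U$ within $W\setminus\{q\}$ and under the choice of the imaginary unit $J\in\mathbb{S}$. The proof of (2) is then completely analogous: use the right Cauchy formula together with \eqref{rightdeco} and Proposition \ref{sh}, reversing the order of the factors as appropriate.

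The only place where a bit of care is needed is the interchange of $\overline{\mathcal{D}}$ and the integral together with the sign bookkeeping when reindexing the sum; everything else is a direct consequence of the kernel identities proved in Theorem \ref{res2} and the regularity statements in Propositions \ref{sh} and \ref{regu2}. I do not foresee a serious obstacle.
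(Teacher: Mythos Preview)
Your proposal is correct and follows essentially the same approach as the paper: apply $\overline{\mathcal{D}}$ to the slice Cauchy formula \eqref{cauchy}, invoke Theorem \ref{res2} for the kernel identity, and use Proposition \ref{regu2} for the polyanalyticity of $\breve f^0$. In fact you supply more detail than the paper does (the justification for differentiating under the integral, the sign identity $(-1)^{k+1}q_0^k=-(-q_0)^k$, and the independence argument via right slice hyperholomorphicity of $F_L(s,q)s^{1-k}$), but the strategy is identical.
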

\begin{proof}
We get the thesis by applying the conjugate Fueter operator $ \mathcal{\overline{D}}$ to the Cauchy formulas, see \eqref{cauchy}. By Theorem \ref{res2} it follows \eqref{star} and \eqref{start2}. Finally, the function $\breve f^0(q)$ is polyanalytic of order 2 by Proposition \ref{regu2}.
\end{proof}
In this section we have described the second central row of the diagram \eqref{diagg}. From this section is clear the reason of the lack of the arrow that connects the set of axially polyanalytic functions and their integral representation. Indeed, we obtain it by means of the slice Cauchy formula.

\section{Series expansion of the kernel of the fine structure spaces}
In this section our aim is to address the following 
\newline
\newline
\textbf{Problem }
Is it possible to write a series expansion of $ \mathcal{\overline{D}}S^{-1}_L(s,q)$ and $ S^{-1}_R(s,q) \mathcal{\overline{D}}$ in terms of $q$ and $ \bar{q}$?
\newline
\newline
In order to answer this question we need the following series expansion of the slice hyperholomorphic Cauchy kernels, see \cite[Thm. 2.1.22]{CGKBOOK} \cite{CSS4}. For $q$, $s \in \mathbb{H}$ with $|q| < |s|$ we have
\begin{equation}
\label{expa}
S^{-1}_L(s,q)=\sum_{n=0}^\infty q^n s^{-1-n}, \qquad S^{-1}_R(s,q)=\sum_{n=0}^\infty  s^{-1-n}q^n . 
\end{equation}
Therefore it is clear that in order to find the series expansions of $ \mathcal{\overline{D}}S^{-1}_L(s,q)$ and $ S^{-1}_R(s,q) \mathcal{\overline{D}}$ it is fundamental to understand the action of the conjugate Fueter operator $ \mathcal{\overline{D}}$ over the monomial $q^n$. 
\begin{lemma}
\label{res3}
For $n \geq 1$ we have
\begin{equation}
\label{F1}
\mathcal{\overline{D}} q^n= 2 \left( n q^{n-1}+ \sum_{k=1}^n q^{n-k} \bar{q}^{k-1} \right).
\end{equation}
Moreover,
\begin{equation}
\label{comm}
q^n\mathcal{\overline{D}}=\mathcal{\overline{D}} q^n.
\end{equation}
\end{lemma}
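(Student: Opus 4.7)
The plan is a direct computation from the definition $\overline{\mathcal{D}} = \partial_{q_0} - \sum_{i=1}^{3} e_i \partial_{q_i}$. The cornerstone is the purely algebraic identity
\[ \sum_{i=1}^{3} e_i\, p\, e_i \;=\; -p - 2\bar p \qquad \text{for every } p \in \mathbb{H}, \]
which, by $\mathbb{R}$-linearity, reduces to a check on the basis $\{1,e_1,e_2,e_3\}$ via the multiplication table: e.g.\ $\sum_i e_i^{2} = -3 = -1 - 2\cdot 1$, and $e_2 e_1 e_2 = e_3 e_1 e_3 = e_1$, whence $\sum_i e_i e_1 e_i = -e_1 + 2e_1 = e_1 = -e_1 - 2\bar e_1$. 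The cases $p = e_2, e_3$ are analogous, so the identity holds on every quaternion.

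To prove \eqref{F1} I would compute the partial derivatives of $q^n$ separately. Since $q_0$ commutes with $q$, the ordinary power rule gives $\partial_{q_0} q^n = n q^{n-1}$. For the spatial derivatives, since $q$ commutes with itself, the (non-commutative) Leibniz rule still applies and produces
\[ \partial_{q_i} q^n \;=\; \sum_{k=0}^{n-1} q^k\, e_i\, q^{n-1-k}. \]
Multiplying by $e_i$ on the left, summing over $i$, and plugging the inner sum into the cornerstone identity with $p = q^k$ (so $\bar p = \bar q^k$) yields
\[ \sum_{i=1}^{3} e_i \partial_{q_i} q^n \;=\; \sum_{k=0}^{n-1} \bigl(-q^k - 2\bar q^k\bigr)\, q^{n-1-k} \;=\; -n q^{n-1} - 2 \sum_{k=0}^{n-1} \bar q^k\, q^{n-1-k}. \]
Subtracting this from $\partial_{q_0} q^n$ and using that $q\bar q = |q|^2 \in \mathbb{R}$ (so $q$ and $\bar q$ commute, and after reindexing $k \mapsto k+1$ the residual sum becomes $\sum_{k=1}^{n} q^{n-k} \bar q^{k-1}$) delivers exactly \eqref{F1}.

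For the commutation \eqref{comm}, I would rerun the same computation with the $e_i$ placed on the right of $\partial_{q_i} q^n$. Since each $q^k$ in the Leibniz expansion already sits entirely to the left of $\sum_i e_i q^{n-1-k} e_i$, the cornerstone identity still applies, producing the symmetric expression $2n q^{n-1} + 2 \sum_{k=0}^{n-1} q^k \bar q^{n-1-k}$; the commutativity of $q$ with $\bar q$ then forces this to coincide with $\overline{\mathcal{D}} q^n$. The main obstacle is really only the cornerstone identity: once it is in hand, everything else is bookkeeping, but that bookkeeping does require careful attention to the position of each $e_i$, since the only legitimate commutations in this computation are those of $q$ with itself, with $q_0$, and with $\bar q$.
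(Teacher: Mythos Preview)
Your argument is correct and follows the same overall strategy as the paper --- compute $\partial_{q_0}q^n$ and $\mathcal{D}_{\underline{q}}q^n:=\sum_i e_i\partial_{q_i}q^n$ separately and subtract --- but the way you obtain $\mathcal{D}_{\underline{q}}q^n$ is different. The paper simply quotes Begeher's formula $\mathcal{D}q^n=-2\sum_{k=1}^n q^{n-k}\bar q^{k-1}$ and reads off $\mathcal{D}_{\underline{q}}q^n=\mathcal{D}q^n-\partial_{q_0}q^n$; you instead derive $\mathcal{D}_{\underline{q}}q^n$ from scratch via the Leibniz expansion $\partial_{q_i}q^n=\sum_{k}q^k e_i q^{n-1-k}$ together with the algebraic identity $\sum_i e_i p e_i=-p-2\bar p$. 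Your route is more self-contained and exposes the mechanism behind Begeher's lemma (indeed, adding $\partial_{q_0}q^n$ to your expression for $\mathcal{D}_{\underline{q}}q^n$ reproduces that lemma), at the cost of a little more bookkeeping; the paper's route is shorter because the main computation is outsourced to a reference. For \eqref{comm} the paper just says ``similar computations'', and your right-sided version of the same calculation makes this explicit.
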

\begin{proof}
By \cite[Lemma 1]{B} we know that
$$\mathcal{D} q^n=(\partial_{q_0}+ \mathcal{D}_{\underline{q}})q^n=-2 \sum_{k=1}^n q^{n-k} \bar{q}^{k-1},$$
where $ \mathcal{D}_{\underline{q}}:= \sum_{i=1}^3 e_i \partial_{q_i}$.
Then we have
\begin{equation}
\mathcal{D}_{\underline{q}} q^n=-2 \sum_{k=1}^n q^{n-k} \bar{q}^{k-1}-n q^{n-1}.
\end{equation}
Therefore
\begin{equation}
\mathcal{\overline{D}} q^n= \left(\partial_{q_0}-\mathcal{D}_{\underline{q}}\right)q^n=2 \left( n q^{n-1}+ \sum_{k=1}^n q^{n-k} \bar{q}^{k-1} \right).
\end{equation}
Finally formula \eqref{comm} follows with similar computations.
\end{proof}

It is possible to write polynomials $\mathcal{\overline{D}}q^n$ in terms of the Clifford-Appell polynomials in the quaternionic setting, see \cite{CMF1}. This family of axially monogenic homogeneous polynomials is defined as
\begin{equation}
\label{polino}
Q_{\ell}(q,\bar{q})= \frac{2}{(\ell+1) (\ell+2)}\sum_{j=0}^\ell (\ell-j+1) q^{\ell-j} \bar{q}^j, \quad \hbox{for any} \quad \ell \geq 0.
\end{equation}

\begin{proposition}
\label{res4}
Let $n \geq 2$, then for $q \in \mathbb{H}$, we have
\begin{equation}
\label{polydeco1}
\mathcal{\overline{D}} q^n=2 n\sum_{k=0}^1 q_0^k (-1)^{k} (n+1-2k)Q_{n-1-k}(q, \bar{q}).
\end{equation}
\end{proposition}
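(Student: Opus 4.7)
The plan is to prove the identity by expanding both sides as explicit polynomials in the commuting variables $q$ and $\bar q$ (they commute since $\bar q = 2q_0 - q$) and then matching the coefficients of each monomial $q^{n-1-j}\bar q^j$ for $j=0,\dots,n-1$. I expect a direct algebraic verification, not an induction.

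For the left-hand side, Lemma \ref{res3} gives, after the substitution $j=k-1$,
$$\mathcal{\overline D}q^n = 2nq^{n-1} + 2\sum_{j=0}^{n-1} q^{n-1-j}\bar q^j.$$
Reading off coefficients, the coefficient of $q^{n-1}$ is $2n+2$, the coefficient of $\bar q^{n-1}$ is $2$, and every intermediate monomial $q^{n-1-j}\bar q^j$ with $1\le j\le n-2$ carries coefficient $2$.

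For the right-hand side, I unpack the sum over $k\in\{0,1\}$ as
$$2n(n+1)\,Q_{n-1}(q,\bar q) - 2n(n-1)\,q_0\, Q_{n-2}(q,\bar q),$$
and plug in the explicit formula \eqref{polino}. The first term contributes $4\sum_{j=0}^{n-1}(n-j)q^{n-1-j}\bar q^j$. For the second term I substitute $q_0=(q+\bar q)/2$ and use the commutativity of $q$ and $\bar q$ to distribute $q_0$ across the sum, splitting it into a $q$-part and a $\bar q$-part. Reindexing the $\bar q$-part by $j\mapsto j+1$ aligns it with the $q$-part so that the three sums have comparable ranges.

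The heart of the argument is the coefficient-matching step: after the reindexing, the coefficient of the intermediate monomial $q^{n-1-j}\bar q^j$ on the right-hand side becomes $4(n-j) - 2(n-1-j) - 2(n-j) = 2$ for $1\le j\le n-2$, while the boundary terms at $j=0$ and $j=n-1$ specialize to $4n - 2(n-1) = 2n+2$ and $4 - 2 = 2$ respectively, in complete agreement with the expansion of the left-hand side. The only real obstacle is the careful bookkeeping of the boundary contributions coming from the two shifted sums (in particular, the $j=0$ term of the $q$-part and the $j=n-1$ term of the reindexed $\bar q$-part lie outside the range of the other); once these endpoints are isolated, the identity reduces to the three arithmetic verifications above.
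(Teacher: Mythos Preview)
Your argument is correct: once you observe that $q$ and $\bar q$ commute, both sides are polynomials in $q,\bar q$, and the coefficient-matching you outline goes through exactly as stated. The three arithmetic checks at $j=0$, $1\le j\le n-2$, and $j=n-1$ are accurate.

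The paper's proof takes a different route. Rather than expanding $Q_{n-1}$ and $Q_{n-2}$ via their definition and comparing with Lemma~\ref{res3}, the paper first establishes the intermediate identity
\[
\mathcal{\overline D}q^n \;=\; -\Delta(q^{n+1}) + q_0\,\Delta q^n,
\]
by writing $g_0(q):=\mathcal{\overline D}q^n - q_0\Delta q^n$, inserting the known expansion $\Delta q^n = -4\sum_{k=1}^{n-1}(n-k)q^{n-k-1}\bar q^{k-1}$ together with Lemma~\ref{res3}, and simplifying with $2q_0=q+\bar q$ until $g_0(q)=4\sum_{k=1}^n(n-k+1)q^{n-k}\bar q^{k-1}=-\Delta(q^{n+1})$. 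Only then does it invoke the cited relation $\Delta q^n = -2n(n-1)Q_{n-2}(q,\bar q)$ from \cite{DKS} to rewrite both Laplacians in terms of the Clifford--Appell polynomials. So the paper's argument passes through the Laplacian and an external result, whereas yours is self-contained and works directly from the definition~\eqref{polino} of $Q_\ell$. Your approach is more elementary; the paper's has the side benefit of isolating the identity $\mathcal{\overline D}q^n = -\Delta(q^{n+1}) + q_0\Delta q^n$, which makes the polyanalytic decomposition transparent.
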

\begin{proof}
We write
\begin{equation}
\mathcal{\overline{D}} q^n = \left(\mathcal{\overline{D}} q^n-q_{0} \Delta q^n\right)+ q_0 \Delta q^n=g_0(q)+g_1(q),
\end{equation}
and we consider $g_0(q)$. From the fact that
\begin{equation}
\label{lapla}
\Delta q^n =-4 \sum_{k=1}^{n-1}(n-k) q^{n-k-1} \bar{q}^{k-1},
\end{equation}
see \cite[Thm. 3.2]{DKS} and by Lemma \ref{res2} we can write
$$g_0(q)= \mathcal{\overline{D}}q^n-q_0\Delta q^n=2n q^{n-1}+2 \sum_{k=1}^{n} q^{n-k} \bar{q}^{k-1}+4 q_{0} \sum_{k=1}^{n-1}(n-k)q^{n-k-1} \bar{q}^{k-1}.$$
Since $2 q_0=q+ \bar{q}$ we obtain
\begin{eqnarray*}
g_0(q)&=&2 \left(n q^{n-1} + \sum_{k=1}^{n} q^{n-k} \bar{q}^{k-1}+\sum_{k=1}^{n-1}(n-k) q^{n-k} \bar{q}^{k-1}+ \sum_{k=1}^{n-1} (n-k) q^{n-k-1} \bar{q}^k  \right)\\
&=& 2 \left( \sum_{k=1}^{n} q^{n-k} \bar{q}^{k-1}+\sum_{k=1}^{n-1}(n-k) q^{n-k} \bar{q}^{k-1}+ \sum_{k=0}^{n-1} (n-k) q^{n-k-1} \bar{q}^k  \right)\\
&=& 2 \left( \sum_{k=1}^{n} q^{n-k} \bar{q}^{k-1}+\sum_{k=1}^{n}(n-k) q^{n-k} \bar{q}^{k-1}+ \sum_{k=1}^{n} (n-k+1) q^{n-k} \bar{q}^{k-1}  \right)\\
&=& 4 \sum_{k=1}^n (n-k+1) q^{n-k} \bar{q}^{k-1}.
\end{eqnarray*}
By formula \eqref{lapla} we get
$$ g_0(q)=- \Delta(q^{n+1}).$$
This implies that
\begin{equation}
\label{star1}
\mathcal{\overline{D}} q^n=- \Delta(q^{n+1})+ q_0 \Delta q^{n}.
\end{equation}
By \cite[Rem. 3.9]{DKS} we know that for $n \geq 2$ we have
\begin{equation}
\label{malo}
\Delta(q^{n})=-2n(n-1) Q_{n-2}(q, \bar{q})
\end{equation}
where the homogenous polynomials $ Q_n(q, \bar{q})$ are defined in \eqref{polino}. Finally by combining formula \eqref{star1} and formula \eqref{malo} we get
$$
\mathcal{\overline{D}} q^n=2n \left[(n+1) Q_{n-1}(q, \bar{q})-2q_0(n-1) Q_{n-2}(q. \bar{q})\right]=2 n\sum_{k=0}^1 q_0^k (-1)^{k} (n+1-2k)Q_{n-1-k}(q, \bar{q}).
$$
\end{proof}
  
Formula \eqref{polydeco1} can be considered as the polyanalytic decomposition of the polynomials $ \mathcal{\overline{D}} q^n$ since the functions $(n+1) Q_{n-1}(q, \bar{q})$ and $(n-1) Q_{n-2}(q. \bar{q})$ are left and right axially monogenic.

\begin{remark}
The polynomials $ \mathcal{\overline{D}} q^n$ were also obtained in \cite{DMD3}, by means of other tools, see \cite{ADS}.	
\end{remark}

Now, we have all the instruments to introduce the following

\begin{definition}
Let $s$, $q \in \mathbb{H}$, we define the left $ \mathcal{\overline{D}}$-kernel series as
\begin{equation}
\label{F2}
2 \sum_{n=1}^\infty \left( n q^{n-1}+ \sum_{k=1}^{n} q^{n-k} \bar{q}^{k-1} \right) s^{-1-n},
\end{equation}
and the right $ \mathcal{\overline{D}}$-kernel series as
\begin{equation}
\label{F3}
2 \sum_{n=1}^\infty s^{-1-n}\left( n q^{n-1}+ \sum_{k=1}^{n} q^{n-k} \bar{q}^{k-1} \right),
\end{equation}
\end{definition}

\begin{proposition}
\label{res5}
Let $s$, $q \in \mathbb{H}$ with $|q|< |s|$, the left and right $ \mathcal{\overline{D}}$-kernel series are convergent.
\end{proposition}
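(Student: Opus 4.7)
The plan is to prove absolute convergence of both series by a direct majorant argument, relying on two standard algebraic facts about $\mathbb{H}$: the norm is multiplicative, $|ab|=|a||b|$, and conjugation is an isometry, $|\bar q|=|q|$. First I would estimate the inner sum of \eqref{F2} termwise:
$$\left| \sum_{k=1}^{n} q^{n-k}\bar{q}^{k-1} \right| \leq \sum_{k=1}^{n} |q|^{n-k} |q|^{k-1} = n|q|^{n-1},$$
and combine this with $|nq^{n-1}|=n|q|^{n-1}$ to obtain
$$\left| 2 \left( nq^{n-1} + \sum_{k=1}^n q^{n-k}\bar{q}^{k-1}\right) s^{-1-n}\right| \leq \frac{4n}{|s|^{2}}\left(\frac{|q|}{|s|}\right)^{n-1}.$$

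Then I would recognize the majorant as a differentiated geometric series: $\sum_{n=1}^\infty n t^{n-1}=(1-t)^{-2}$ whenever $|t|<1$. Setting $t=|q|/|s|<1$ gives a finite upper bound, so the series \eqref{F2} converges absolutely by comparison, and in fact uniformly on compact subsets of $\{(q,s):|q|/|s|\leq\rho\}$ for any $\rho<1$, which will be useful later when interchanging sums with the Fueter operator. Since the right $\overline{\mathcal{D}}$-kernel series \eqref{F3} has exactly the same factors in reversed order and the quaternionic norm is insensitive to the order of multiplication, the identical majorant controls \eqref{F3}. I do not expect any genuine obstacle here; the only point to double-check is the elementary equality $|q^{n-k}\bar q^{k-1}|=|q|^{n-1}$, which is immediate from $|ab|=|a||b|$ and $|\bar q|=|q|$ in $\mathbb{H}$.
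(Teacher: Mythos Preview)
Your argument is correct and essentially identical to the paper's: both reduce to the majorant $4\sum_{n\ge 1} n|q|^{n-1}|s|^{-1-n}$ via $|q^{n-k}\bar q^{k-1}|=|q|^{n-1}$, with the only cosmetic difference that the paper finishes with the ratio test while you invoke the closed form $\sum n t^{n-1}=(1-t)^{-2}$. Your added remark on uniform convergence on compacta is a harmless bonus.
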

\begin{proof}
We show only the convergence of the left $ \mathcal{\overline{D}}$-kernel series. The convergence of the right one follows by similar computations.
\\ In order to show the convergence it is enough to prove that the series of moduli is convergent, i.e.
$$ 4  \sum_{n=1}^{+ \infty} n |q|^{n-1} s^{-1-n}.$$
The series converges by the ratio test, indeed
\begin{equation}
\label{conve}
\lim_{n \to + \infty} \frac{(n+1) |q|^n |s|^{-2-n}}{n |q|^{n-1}|s|^{-1-n}}=|q||s|^{-1} <1.
\end{equation}
\end{proof}
The following result contains the solution to the problem stated at the beginning of this section
\begin{lemma}
For $q$, $s \in \mathbb{H}$ such that $|q| < |s|$, we have
\begin{eqnarray*}
\sum_{k=0}^1 q_0^k F_L(s,q)(-1)^{k+1} s^{1-k} &=& 2 \sum_{n=1}^\infty \left( n q^{n-1}+ \sum_{j=1}^{n} q^{n-j} \bar{q}^{j-1} \right) s^{-1-n}  \\
&=& 2 \sum_{n=2}^\infty \sum_{j=0}^1 nq_0^j (-1)^j (n+1-2j) Q_{n-1-j}(q, \bar{q}) s^{-1-n},
\end{eqnarray*}
and
\begin{eqnarray}
\label{P1}
\sum_{k=0}^1 s^{1-k} (-1)^{k+1}F_R(s,q)q_0^k  &=& 2 \sum_{n=1}^\infty s^{-1-n}\left( n q^{n-1}+ \sum_{k=1}^{n} q^{n-k} \bar{q}^{k-1} \right)   \\
\nonumber
&=& 2 \sum_{n=2}^\infty  \sum_{j=0}^1 ns^{-1-n} q_0^j (-1)^j (n+1-2j) Q_{n-1-j}(q, \bar{q}).
\end{eqnarray}
\end{lemma}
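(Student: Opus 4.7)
The plan is to invoke Theorem~\ref{res2}, which identifies the left-hand side $\sum_{k=0}^{1} q_0^k F_L(s,q)(-1)^{k+1} s^{1-k}$ with $\mathcal{\overline{D}} S_L^{-1}(s,q)$, and then expand the Cauchy kernel via the series \eqref{expa}. For $|q|<|s|$ one has $S_L^{-1}(s,q) = \sum_{n=0}^\infty q^n s^{-1-n}$, and since $\mathcal{\overline{D}}$ acts only on $q$ and the $n=0$ term is annihilated, moving $\mathcal{\overline{D}}$ inside the sum yields $\mathcal{\overline{D}} S_L^{-1}(s,q) = \sum_{n=1}^\infty (\mathcal{\overline{D}} q^n)\, s^{-1-n}$.

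To get the first equality in the statement I would simply substitute the explicit formula \eqref{F1} from Lemma~\ref{res3}, namely $\mathcal{\overline{D}} q^n = 2\bigl(nq^{n-1} + \sum_{j=1}^n q^{n-j} \bar q^{j-1}\bigr)$, which by definition reproduces the left $\mathcal{\overline{D}}$-kernel series \eqref{F2}. For the second equality I would apply Proposition~\ref{res4} summand by summand, rewriting $\mathcal{\overline{D}} q^n = 2n\sum_{j=0}^1 q_0^j (-1)^j (n+1-2j) Q_{n-1-j}(q,\bar q)$, which gives the Clifford-Appell form. At $n=1$ the same formula still makes sense because the $j=1$ contribution vanishes through the factor $n+1-2j=0$, leaving only the $j=0$ term $4 Q_0(q,\bar q)=4$, which correctly reproduces $\mathcal{\overline{D}} q = 4$, so the series may be understood to begin at $n=1$ with the same expression. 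The right $\mathcal{\overline{D}}$-kernel identity \eqref{P1} is obtained along identical lines from $S_R^{-1}(s,q) = \sum_{n=0}^\infty s^{-1-n} q^n$, using the commutation relation \eqref{comm} of Lemma~\ref{res3} to move $\mathcal{\overline{D}}$ past each $q^n$ and then applying Lemma~\ref{res3} and Proposition~\ref{res4} as before.

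The only delicate step, and where I expect the main obstacle to lie, is justifying the interchange of $\mathcal{\overline{D}}$ with the infinite sum; this amounts to showing that the termwise-differentiated series converges locally uniformly on $\{|q|<|s|\}$ so that classical results on differentiation under the summation sign apply. However, the differentiated series is precisely the $\mathcal{\overline{D}}$-kernel series \eqref{F2} (respectively \eqref{F3}), and Proposition~\ref{res5} already establishes its absolute convergence via a ratio-test estimate whose limit $|q||s|^{-1}<1$ is bounded uniformly on any compact subset of $\{|q|<|s|\}$. Once this justification is in place, the two equalities reduce to a direct substitution of Lemma~\ref{res3} and Proposition~\ref{res4}, and the proof is complete.
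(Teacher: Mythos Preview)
Your proposal is correct and follows essentially the same approach as the paper's own proof: identify the left-hand side with $\mathcal{\overline{D}} S_L^{-1}(s,q)$ via Theorem~\ref{res2}, expand the Cauchy kernel as in \eqref{expa}, justify the interchange of $\mathcal{\overline{D}}$ and summation through Proposition~\ref{res5}, and then substitute Lemma~\ref{res3} and Proposition~\ref{res4} termwise. Your treatment is in fact slightly more careful than the paper's in that you explicitly address the $n=1$ term in the Clifford--Appell expansion and spell out the local uniformity needed for the interchange.
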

\begin{proof}
By formulas \eqref{expa} we know that we can expand the left Cauchy kernel as
$$ S^{-1}_L(s,q)= \sum_{n=0}^{\infty} q^n s^{-1-n}.$$
Thus by Proposition \ref{res5} (which allows to exchange the operator $ \mathcal{\overline{D}}$ with the sum) and by Theorem \ref{res2} we get
\begin{eqnarray*}
\sum_{k=0}^1 q_0^k F_L(s,q)(-1)^{k+1} s^{1-k} &=& \mathcal{\overline{D}} S^{-1}_L(s,q)\\
&=& \sum_{n=0}^\infty (\mathcal{\overline{D}} q^n) s^{-1-n}\\
&=& 2 \left( \sum_{n=1}^\infty nq^{n-1}+ \sum_{j=1}^n q^{n-j} \bar{q}^{j-1}\right)s^{-1-n}.
\end{eqnarray*}
The second equality of the statement follows by applying Proposition \ref{res4} in the last equality of the previous computations. 
\\ By similar arguments it is possible to prove the equalities \eqref{P1}. 

\end{proof}
Basically, we have given two possible answers to the initial problem. Indeed, we get two possible expansions of $ \mathcal{\overline{D}}S^{-1}_L(s,q)$ and $ S^{-1}_R(s,q)\mathcal{\overline{D}}$, respectively. These will be fundamental in the next section.

\section{The polyanalytic functional calculus of order $2$ on the $S$-spectrum}
In this section we will analyse the central third row of the diagram \eqref{diagg}. From the shape of the slice hyperholomorphic Cauchy kernel, that we use to prove the integral representation (see Theorem \ref{inrap}), we have to restricted to the case of commuting operators.  

\begin{definition} \label{dbars} Let $T=T_0+\sum_{i=1}^{3} e_iT_i\in \mathcal{BC}(X)$, $s\in\hh$, we formally define the right $\overline \bigD $-kernel operator as 
$$ 2\sum_{n=1}^\infty \left(nT^{n-1}+\sum_{k=1}^n T^{n-k}\bar T^{k-1} \right) s^{-1-n} $$
and the left $\overline\bigD$-kernel operator as 
$$ 2\sum_{n=1}^\infty s^{-1-n}\left(nT^{n-1}+\sum_{k=1}^n T^{n-k}\bar T^{k-1}\right).$$
\end{definition}

Now, we recall the expansion in series of $\mathcal{F}$-resolvent operators in terms of $T$ and $ \bar{T}$, see \cite[Theorem 3.9]{CDS} with $n=3$. Let $T=T_0+\sum_{i=1}^3 e_iT_i\in \mathcal{BC}(X)$. For $s\in\hh$ with $\|T\|<|s|$ we have
\begin{equation}
\label{p1}
F_L(s,T)=-4\sum_{n=2}^\infty \sum_{\ell=1}^{n-1}(n-k)T^{n-k-1}\bar T^{k-1}s^{-1-n} \qquad F_R(s,T)=-4\sum_{n=2}^\infty \sum_{\ell=1}^{n-1}(n-k) s^{-1-n}T^{n-k-1}\bar T^{k-1}.
\end{equation}

This is fundamental for the following result.
\begin{proposition}\label{p2}
Let $T=T_0+\sum_{i=1}^3 e_iT_i\in \mathcal{BC}(X)$, $s\in\hh$ and $\|T\|<|s|$, the series in Definition \ref{dbars} converges. Moreover, we have   
\begin{equation}\label{sl2}
\sum_{j=0}^1 T_0^j(-1)^{j+1}F_{L}(s, T)s^{1-j}=2\sum_{n=1}^\infty\left(nT^{n-1}+\sum_{k=1}^n T^{n-k}\bar T^{k-1}\right)s^{-1-n}
\end{equation}
and
\begin{equation}\label{sr2}
\sum_{j=0}^1 s^{1-j}(-1)^{j+1}F_{R}(s, T) T_0^j=2\sum_{n=1}^\infty s^{-1-n} \left(nT^{n-1}+\sum_{k=1}^nT^{n-k}\bar T^{k-1}\right),
\end{equation}
where the left and right $\mathcal{F}$- resolvent operators are defined in \eqref{fres}.
\end{proposition}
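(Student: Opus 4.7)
The plan is to lift the scalar identity proved in the final lemma of Section 5 (relating $\overline\bigD S_L^{-1}(s,q)$ to a power series in $q,\bar q$) up to the operator level. This is legitimate precisely because $T\in\mathcal{BC}(X)$ forces $T$, $\bar T$ and $T_0$ to commute pairwise: a direct expansion using $e_ie_j=-e_je_i$ for $i\ne j$ together with the commutativity of the components $T_i$ yields $T\bar T = T_0^2+T_1^2+T_2^2+T_3^2 = \bar T T$, so any polynomial identity in the commuting indeterminates $q,\bar q,q_0$ transfers verbatim under $q\mapsto T$, $\bar q\mapsto\bar T$, $q_0\mapsto T_0$.

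For the convergence statement of Definition \ref{dbars}, I would use the norm bound
$$\left\|nT^{n-1}+\sum_{k=1}^{n}T^{n-k}\bar T^{k-1}\right\|\le 2n\|T\|^{n-1},$$
valid under the hypothesis $\|T\|<|s|$ (which, as in \eqref{p1}, also controls $\|\bar T\|$ in the operator norm on $\mathcal{BC}(X)$). A ratio-test computation identical to \eqref{conve} then gives absolute convergence of both series in $\mathcal{B}(X)$.

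For the identity \eqref{sl2} I would substitute \eqref{p1} into the left-hand side,
$$\sum_{j=0}^{1}T_0^j(-1)^{j+1}F_L(s,T)s^{1-j}=-F_L(s,T)\,s+T_0 F_L(s,T),$$
exchange the resulting double series by absolute convergence, and collect powers of $s^{-1-n}$. Writing $2T_0=T+\bar T$ (valid because $T\bar T=\bar T T$), performing the shift $n\mapsto n-1$ in the $-F_L(s,T)s$ part, and the shift $k\mapsto k+1$ on the $\bar T$-term of $T_0F_L(s,T)$, the $n$-th operator coefficient collapses to $2nT^{n-1}+2\sum_{k=1}^{n}T^{n-k}\bar T^{k-1}$, which matches the right-hand side. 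This is the operator transcription of the scalar calculation carried out in Proposition \ref{res4} and in the last lemma of Section 5. The right-handed identity \eqref{sr2} is obtained by the same mechanism, starting from \eqref{rightdeco} of Theorem \ref{res2} and the right series in \eqref{p1}; only the placement of operator factors changes.

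The main obstacle is the algebraic bookkeeping in the index shifts, but this is routine once absolute convergence is in place. The only genuinely operator-theoretic input is the commutation $T\bar T=\bar T T$, which is precisely what forces the restriction to $\mathcal{BC}(X)$ rather than all of $\mathcal{B}(X)$.
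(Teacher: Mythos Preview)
Your proposal is correct and follows essentially the same route as the paper: the convergence is handled by the norm bound and ratio test exactly as in Proposition \ref{res5}, and the identity \eqref{sl2} is obtained by inserting the series expansion \eqref{p1} for $F_L(s,T)$ into $-F_L(s,T)s+T_0F_L(s,T)$, using $2T_0=T+\bar T$, and performing the same index shifts you describe. Your explicit remark that the restriction to $\mathcal{BC}(X)$ is needed precisely so that $T\bar T=\bar T T$ (hence all the scalar polynomial manipulations transfer verbatim) is a useful clarification that the paper leaves implicit.
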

\begin{proof}
First of all, we show the convergence of the series. It is sufficient to prove that the series of the operator norm:  
$$ 4\sum_{n=1}^\infty n\|T\|^{n-1}s^{-1-n}. $$
is convergent. This follows from computations similar to those in the proof of Proposition \ref{res5}.
\\ Now we prove equality \eqref{sl2}. By formulas \eqref{p1} we know how to expand in series $ F_L(s,T)$, thus we have
	$$
	\sum_{j=0}^1 T_0^j(-1)^{j+1}F_{L}(s, T)s^{1-j}=4\sum_{n=2}^\infty \sum_{k=1}^{n-1}(n-k)T^{n-k-1}\bar T^{k-1} s^{-n}-4T_0\sum_{n=2}^\infty\sum_{k=1}^{n-1}(n-k)T^{n-k-1}\bar T^{k-1}s^{-1-n}.
	$$
Now, to show  equality \eqref{sl2} is enough to prove the following equality
	\[
	\begin{split}
		& 4T_0 \sum_{n=2}^\infty\sum_{k=1}^{n-1}(n-k)T^{n-k-1}\bar T^{k-1}s^{-1-n} \\
		& = 4\sum_{n=2}^\infty \sum_{k=1}^{n-1}(n-k)T^{n-k-1}\bar T^{k-1} s^{-n} -2\sum_{n=1}^\infty nT^{n-1}s^{-1-n}-2\sum_{n=1}^\infty\sum_{k=1}^n T^{n-k}\bar T^{k-1}s^{-1-n}.
	\end{split}
	\]
	At this point, we are going to manipulate the series in the left hand side of the previous equality in order to obtain the terms in the right hand side. By using the relation: $2T_0=T+\bar T$, we obtain
	\[
	\begin{split}
		&4T_0 \sum_{n=2}^\infty\sum_{k=1}^{n-1}(n-k)T^{n-k-1}\bar T^{k-1}s^{-1-n}\\
		&=2\sum_{n=2}^\infty\sum_{k=1}^n(n-k) T^{n-k}\bar T^{k-1}s^{-1-n}+2\sum_{n=2}^\infty\sum_{k=1}^{n-1}(n-k)T^{n-k-1}\bar T^k s^{-1-n}\\
		&= 2\sum_{\ell=3}^\infty\sum_{k=1}^{\ell-1}(\ell-k-1)T^{\ell-1-k}\bar T^{k-1}s^{-\ell}+2\sum_{\ell=3}^\infty\sum_{\alpha=2}^{\ell-1}(\ell-\alpha)T^{\ell-\alpha-1}\bar T^{\alpha-1}s^{-\ell}\\
		&= 2\sum_{\ell=3}^\infty\sum_{k=1}^{\ell-1}(\ell-k)T^{\ell-1-k}\bar T^{k-1}s^{-\ell}-2\sum_{\ell=3}^\infty\sum_{k=1}^{\ell-1}T^{\ell-1-k}\bar T^{k-1}s^{-\ell}+2\sum_{\ell=3}^\infty\sum_{\alpha=1}^{\ell-1}(\ell-\alpha)T^{\ell-\alpha-1}\bar T^{\alpha-1}s^{-\ell}+\\
		&-2\sum_{\ell=3}^{\infty}(\ell-1) T^{\ell-2}s^{-\ell},\\
	\end{split}
	\]
	where in the second equality we change indexes in the first sum with $\ell=n+1$, as well as, in the second sum with $\ell=n+1$ and $k=\alpha-1$. Now, starting the first and the third series from $\ell=2$ we get
	\[
	\begin{split}
		&4T_0 \sum_{n=2}^\infty\sum_{k=1}^{n-1}(n-k)T^{n-k-1}\bar T^{k-1}s^{-1-n}\\
		&= 2\sum_{\ell=2}^\infty\sum_{k=1}^{\ell-1}(\ell-k)T^{\ell-1-k}\bar T^{k-1}s^{-\ell}-2s^{-2}-2\sum_{\ell=3}^\infty\sum_{k=1}^{\ell-1}T^{\ell-1-k}\bar T^{k-1}s^{-\ell}
		\\&+2\sum_{\ell=2}^\infty\sum_{\alpha=1}^{\ell-1}(\ell-\alpha)T^{\ell-\alpha-1}\bar T^{\alpha-1}s^{-\ell} -2s^{-2}-2\sum_{\ell=3}^{\infty}(\ell-1) T^{\ell-2}s^{-\ell}\\
		&=4\sum_{n=2}^\infty\sum_{k=1}^{n-1}(n-k)T^{nl-k-1}\bar T^{k-1}s^{-n} -2\sum_{\ell=2}^{\infty}(\ell-1) T^{\ell-2}s^{-\ell}-2\sum_{\ell=2}^\infty\sum_{k=1}^{\ell-1}T^{\ell-1-k}\bar T^{k-1}s^{-\ell}\\
		&=4\sum_{n=2}^\infty\sum_{k=1}^{n-1}(n-k)T^{n-k-1}\bar T^{k-1}s^{n} -2\sum_{\ell=1}^{\infty}n T^{n-1}s^{-n-1}-2\sum_{n=1}^\infty\sum_{k=1}^{n}T^{n-k}\bar T^{k-1}s^{-n-1},
	\end{split}
	\]
	where the last equality is obtained by the change of indexes in the second and in the third series with $n=\ell-1$. By similar arguments it is possible to prove \eqref{sr2}.
\end{proof}
\begin{corollary}
Let $T=T_0+\sum_{i=1}^3 e_iT_i\in \mathcal{BC}(X)$, $s\in\hh$ and $\|T\|<|s|$, then 
$$ \sum_{j=0}^1 T_0^j(-1)^{j+1}F_{L}(s, T)s^{1-j}=2n\sum_{n=1}^{\infty}\left(\sum_{k=0}^1 T_0^k (-1)^k (n+1-2k)Q_{n-1-k}(T,\bar T)\right) s^{-1-n} $$ 
and
$$ \sum_{j=0}^1 s^{1-j} (-1)^{j+1}F_{R}(s, T) T_0^j=\sum_{n=1}^{\infty} s^{-1-n} \left(\sum_{k=0}^1 T_0^k (-1)^k (n+1-2k)Q_{n-1-k}(T,\bar T)\right).$$
\end{corollary}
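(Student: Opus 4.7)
The plan is to combine the operator series expansion for $\sum_{j=0}^1 T_0^j(-1)^{j+1}F_{L}(s, T)s^{1-j}$ furnished by Proposition~\ref{p2} with the scalar identity underlying Proposition~\ref{res4}. The former rewrites the left-hand side as a series in $T$ and $\bar T$ alone, while the latter is used term by term to re-express the brackets $nT^{n-1}+\sum_{k=1}^n T^{n-k}\bar T^{k-1}$ in Clifford--Appell form.

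First I would observe that combining Lemma~\ref{res3} with Proposition~\ref{res4} yields the scalar identity
\[
2\Bigl(nq^{n-1}+\sum_{k=1}^n q^{n-k}\bar q^{k-1}\Bigr)=2n\sum_{k=0}^1 q_0^k(-1)^k(n+1-2k)Q_{n-1-k}(q,\bar q),
\]
valid for every $n\ge 1$ (with the convention $Q_{-1}\equiv 0$, so that the $k=1$ term vanishes when $n=1$); indeed both sides equal $\overline\bigD q^n$. This is a polynomial identity in the three commuting quaternionic variables $q_0,\,q,\,\bar q$.

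Next I would lift this identity to operators. The hypothesis $T\in\mathcal{BC}(X)$ forces $T_0$, $T$ and $\bar T$ to pairwise commute in $\mathcal{BC}(X)$: $T_0$ commutes with each $T_i$, hence with both $T$ and $\bar T$, while a direct expansion using the anticommutativity $e_ie_j=-e_je_i$ for $i\neq j$ yields $T\bar T=\bar T T=T_0^2+\sum_{i=1}^3 T_i^2$. Therefore the scalar identity above remains valid after substituting $q_0,q,\bar q$ by $T_0,T,\bar T$, and in particular $Q_\ell(T,\bar T)$ is unambiguously defined. Inserting the resulting operator identity into the absolutely convergent series
\[
\sum_{j=0}^1 T_0^j(-1)^{j+1}F_{L}(s, T)s^{1-j}=2\sum_{n=1}^\infty\Bigl(nT^{n-1}+\sum_{k=1}^n T^{n-k}\bar T^{k-1}\Bigr)s^{-1-n}
\]
from Proposition~\ref{p2}, and regrouping the factors $s^{-1-n}$, produces the first formula of the corollary. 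The second identity is obtained by repeating the same steps starting from \eqref{sr2}, placing the factor $s^{-1-n}$ on the left and using the fact that, thanks to the commutativity just established, the same polynomial identity continues to hold after switching the ordering of the operators.

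The main potential obstacle lies in the step lifting the scalar polynomial identity to the operator level, which rests entirely on the pairwise commutativity of $T_0$, $T$ and $\bar T$. Outside the $\mathcal{BC}(X)$ setting the polynomial $Q_\ell(T,\bar T)$ would not have an unambiguous meaning and the above rearrangements would fail; this is precisely why the polyanalytic functional calculus of order $2$ in this section is framed for operators with commuting components.
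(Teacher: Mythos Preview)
Your proposal is correct and follows essentially the same route as the paper: invoke Proposition~\ref{p2} to obtain the series expansion, then rewrite each bracket via the polynomial identity from Proposition~\ref{res4} (combined with Lemma~\ref{res3}), lifted to operators. Your justification is in fact more detailed than the paper's one-line proof, since you explicitly verify the pairwise commutativity of $T_0$, $T$, $\bar T$ in $\mathcal{BC}(X)$ and handle the boundary case $n=1$ with the convention $Q_{-1}\equiv 0$, points the paper leaves implicit.
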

\begin{proof}
This result follows by Proposition \ref{p2} and from the fact that we can write the right $ \mathcal{\overline{D}}$-kernel operator in terms of 
$Q_{\ell}(q,\bar{q})= \frac{2}{(\ell+1) (\ell+2)}\sum_{j=0}^\ell (\ell-j+1) T^{\ell-j} \bar{T}^j$, see Proposition \ref{polydeco1}.
\end{proof}
Now, we can give the following
\begin{definition}[$\mathcal{P}_2$-resolvent operators]
Let $T=T_0+ \sum_{i=1}^3 e_i T_i \in \mathcal{BC}(X)$. For $s \in \rho_S(T)$, we define the left $ \mathcal{P}_2$-resolvent operator as
$$ \mathcal{P}_2^L(s,T)=\sum_{j=0}^1 T_0^j(-1)^{j+1}F_{L}(s, T)s^{1-j},$$
and the right $ \mathcal{P}_2$-resolvent operator as
$$ \mathcal{P}_2^R(s,T)=\sum_{j=0}^1 s^{1-j} (-1)^{j+1}F_{R}(s, T) T_0^j.$$
\end{definition}
\begin{lemma}
Let $T=T_0+ \sum_{i=1}^3 e_iT_i \in \mathcal{BC}(X)$. Then
\begin{itemize}
\item the left $ \mathcal{P}_2$-resolvent operator is a $ \mathcal{B}(X)$-valued right slice hyperholomorphic function of the variable $s$ in $\rho_S(T)$;
\item the right $ \mathcal{P}_2$-resolvent operator is a $ \mathcal{B}(X)$-valued left slice hyperholomorphic function of the variable $s$ in $\rho_S(T)$.
\end{itemize}
\end{lemma}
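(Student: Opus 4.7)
The plan is to mirror the scalar argument of Proposition~\ref{sh} in the operator setting. Since $\mathcal{P}_2^L(s,T) = T_0\, F_L(s,T) - F_L(s,T)\, s$, it is enough to check that each of the two summands is a $\mathcal{B}(X)$-valued right slice hyperholomorphic function of $s$ on $\rho_S(T)$; the statement for $\mathcal{P}_2^R(s,T) = -s\, F_R(s,T) + F_R(s,T)\, T_0$ will follow by a completely symmetric argument, swapping the roles of left/right slice hyperholomorphicity and of left/right multiplication by $s$ and by $T_0$.

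The first step is to recall that $F_L(s,T) = -4(s-\bar T)\,\mathcal{Q}_{c,s}(T)^{-2}$ is right slice hyperholomorphic in $s$ on $\rho_S(T)$ and that $F_R(s,T)$ is left slice hyperholomorphic in $s$; these are the operator-valued analogs of \cite[Prop. 2.2.4]{CGKBOOK} and are standard ingredients of the $\mathcal{F}$-functional calculus, cf. \cite[Thm. 7.1.12]{CGKBOOK}. Alternatively, one can verify them slice-by-slice, using that $\mathcal{Q}_{c,s}(T) = s^2 - 2\operatorname{Re}(T)\,s + T\bar T$ is a scalar polynomial in $s$ with operator coefficients, hence commutes with any $J\in\mathbb S$ on $\mathbb{C}_J$, so reading off the coefficients of $1$ and $J$ in $F_L(s,T)$ yields operators that satisfy the Cauchy--Riemann system.

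Next I would prove two elementary closure properties of the class of $\mathcal{B}(X)$-valued right slice hyperholomorphic functions of $s$. On a fixed slice $\mathbb{C}_J$, write $f(s) = \alpha(u,v) + \beta(u,v)\, J$ with $s = u + Jv$ and $\alpha,\beta$ satisfying $\partial_u\alpha = \partial_v\beta$ and $\partial_u\beta = -\partial_v\alpha$. Expanding $f(s)\, s = (\alpha u - \beta v) + (\alpha v + \beta u)\, J$, a direct computation shows that the new coefficients again satisfy the Cauchy--Riemann system; hence right multiplication by $s$ preserves right slice hyperholomorphicity. Moreover, $T_0 \in \mathcal{B}(X_{\mathbb{R}})$ commutes with every quaternionic scalar, so left multiplication by $T_0$ sends $\alpha + \beta\, J$ to $(T_0\alpha) + (T_0\beta)\, J$ and preserves the Cauchy--Riemann system by linearity.

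Combining these observations, both $F_L(s,T)\, s$ and $T_0 F_L(s,T)$ are right slice hyperholomorphic in $s$ on $\rho_S(T)$, and hence so is their difference $\mathcal{P}_2^L(s,T)$. The parallel argument for $\mathcal{P}_2^R(s,T)$ uses instead that left multiplication by $s$ preserves left slice hyperholomorphicity (by the analogous Cauchy--Riemann computation, now with $f(s) = \alpha + J\beta$) and that right multiplication by $T_0$ preserves it. I do not foresee any significant obstacle: the only nontrivial verification is the short Cauchy--Riemann check for multiplication by $s$, and everything else reduces to the known slice hyperholomorphicity of $F_L$ and $F_R$ together with the scalar-commuting nature of $T_0$.
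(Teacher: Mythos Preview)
Your proposal is correct and follows essentially the same route as the paper: the paper's proof consists of the single sentence ``It follows by similar arguments of Proposition~\ref{sh}'', and Proposition~\ref{sh} is precisely the scalar statement that $\overline{\mathcal D}S^{-1}_L(s,q)$ is a sum of right slice hyperholomorphic functions in $s$ (namely $F_L(s,q)s$ and a left-multiple of $F_L(s,q)$). You make the same argument explicit for the operator case, supplying the short Cauchy--Riemann check for right multiplication by $s$ and the observation that $T_0\in\mathcal B(X_{\mathbb R})$ commutes with quaternionic scalars; this is exactly what ``similar arguments'' amounts to here.
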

\begin{proof}
It follows by similar arguments of Proposition \ref{sh}.
\end{proof}
\begin{definition}[Polyanalytic functional calculus of order $2$ on the $S$-spectrum] Let $T=T_0+\sum_{i=1}^3 e_i T_i\in \mathcal{BC}(X)$ and set $ds_J=ds(-J)$ for $J\in\mathbb S$. For every function $\breve f^0=\overline \bigD f$ with $f\in \mathcal{SH}_L(\sigma_S(T))$, we set 
\begin{equation}
\label{fun1}
\breve f^0(T)=\frac 1{2\pi}\int_{\partial (U\cap\cc_J)} \mathcal{P}_2^L(s,T)\, ds_J\, f(s),
\end{equation}
where $\overline U\subset \operatorname{dom}(f)$ and $J\in\mathbb S$ is an arbitrary imaginary unit.
\\For every $\breve f^0=f\overline\bigD$ with $f\in \mathcal{SH}_R(\sigma_S(T))$, we set 
\begin{equation}
\label{fun2}
\breve f^0(T)=\frac 1{2\pi}\int_{\partial (U\cap\cc_J)} f(s)\,ds_J\,\mathcal{P}_2^R(s,T),
\end{equation}
where $U$ and $J$ are as above.
\end{definition}
By following a similar methodology developed in \cite[Theorem 4.6]{CG} we have the following result
\begin{theorem}
The polyanlytic functional calculus of order $2$ on the $S$-spectrum is well defined, i.e., the integrals \eqref{fun1} and \eqref{fun2} depend neither on the imaginary unit $J\in\mathbb S$ nor on the slice Cauchy domain $U$.
\end{theorem}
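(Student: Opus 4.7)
The plan is to mimic the strategy of \cite[Theorem 4.6]{CG} by exploiting two structural facts established earlier: the right $\mathcal{P}_2$-resolvent operator $s \mapsto \mathcal{P}_2^L(s,T)$ is right slice hyperholomorphic on $\rho_S(T)$, and similarly $s\mapsto \mathcal{P}_2^R(s,T)$ is left slice hyperholomorphic there. Thus each integrand $\mathcal{P}_2^L(s,T)\, ds_J\, f(s)$ (resp.\ $f(s)\, ds_J\, \mathcal{P}_2^R(s,T)$) is of the type for which the slice hyperholomorphic Cauchy theorem applies, namely a right slice hyperholomorphic factor times $ds_J$ times a left slice hyperholomorphic factor. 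I will write the argument only for the left case; the right case is entirely analogous.

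\textbf{Step 1: Independence of the slice Cauchy domain.} Fix $J\in\mathbb S$ and take two slice Cauchy domains $U_1, U_2$ with $\sigma_S(T)\subset U_k$ and $\overline{U_k}\subset \mathrm{dom}(f)$. In the nested case $\overline{U_1}\subset U_2$, both $s\mapsto \mathcal{P}_2^L(s,T)$ and $s\mapsto f(s)$ are well-defined and slice hyperholomorphic (right and left, respectively) on an open axially symmetric set containing $\overline{U_2\setminus U_1}$, so the slice hyperholomorphic Cauchy theorem applied on $(U_2\setminus \overline{U_1})\cap \cc_J$ yields equality of the two boundary integrals. The general case is reduced to the nested one by choosing an auxiliary $U_3$ with $\sigma_S(T)\subset U_3$ and $\overline{U_3}\subset U_1\cap U_2$ and applying the nested argument twice.

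\textbf{Step 2: Independence of the imaginary unit.} This is the main obstacle. Fix two units $J_1,J_2\in\mathbb S$ and a slice Cauchy domain $U$ as above; by Step 1 I may shrink freely, so I choose an auxiliary slice Cauchy domain $V$ with $\sigma_S(T)\subset V$ and $\overline V\subset U$. For every $s\in\partial(V\cap\cc_{J_2})$, the point $s$ lies in $\rho_S(T)$ and the map $t\mapsto \mathcal{P}_2^L(t,T)$ is right slice hyperholomorphic on a neighbourhood of $\overline U$; the Cauchy formula \eqref{cauchy} for right slice hyperholomorphic functions then gives
\begin{equation*}
\mathcal{P}_2^L(s,T)=\frac{1}{2\pi}\int_{\partial(U\cap\cc_{J_1})} \mathcal{P}_2^L(t,T)\, dt_{J_1}\, S_R^{-1}(t,s).
\end{equation*}
I substitute this into the definition \eqref{fun1} written on $\partial(V\cap\cc_{J_2})$, apply Fubini's theorem to exchange the two integrals (justified by the uniform boundedness of $\mathcal{P}_2^L$ and $S_R^{-1}$ on the product of the two compact contours, which are disjoint because $\overline V\subset U$), and recognise the inner integral as the left slice Cauchy formula \eqref{cauchy} evaluated at $t$:
\begin{equation*}
\frac{1}{2\pi}\int_{\partial(V\cap\cc_{J_2})}S_R^{-1}(t,s)\, ds_{J_2}\, f(s)=f(t).
\end{equation*}
Collecting terms yields
\begin{equation*}
\frac{1}{2\pi}\int_{\partial(V\cap\cc_{J_2})}\mathcal{P}_2^L(s,T)\, ds_{J_2}\, f(s)=\frac{1}{2\pi}\int_{\partial(U\cap\cc_{J_1})}\mathcal{P}_2^L(t,T)\, dt_{J_1}\, f(t),
\end{equation*}
which combined with Step 1 proves independence of both $J$ and $U$.

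The main difficulty will be the bookkeeping needed to make the Fubini exchange rigorous in the $\mathcal{B}(X)$-valued setting and to ensure that the slice Cauchy formula can be applied to the operator-valued function $t\mapsto \mathcal{P}_2^L(t,T)$; both are standard once one notes that $\mathcal{P}_2^L(s,T)$ is a finite linear combination, with $T_0$-coefficients, of the right slice hyperholomorphic operator-valued functions $F_L(s,T)$ and $F_L(s,T)s$, so that the required regularity and the Cauchy formula in operator-valued form (as used in the proof of the well-posedness of the $\mathcal F$-functional calculus \cite[Thm.~7.1.12]{CGKBOOK}) transfer directly. The right slice hyperholomorphic case \eqref{fun2} is proved by the symmetric argument, using the right Cauchy formula in \eqref{cauchy} together with the left slice hyperholomorphicity of $s\mapsto \mathcal{P}_2^R(s,T)$.
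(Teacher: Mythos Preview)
Your overall strategy---reduce everything to the slice hyperholomorphic Cauchy theorem and a Fubini swap---is exactly the methodology of \cite[Theorem~4.6]{CG} that the paper invokes, and Step~1 is fine. The problem is in Step~2: your displayed Cauchy representation
\[
\mathcal{P}_2^L(s,T)=\frac{1}{2\pi}\int_{\partial(U\cap\cc_{J_1})} \mathcal{P}_2^L(t,T)\, dt_{J_1}\, S_R^{-1}(t,s)
\]
is not valid, because your own setup has $\sigma_S(T)\subset V\subset U$, so $t\mapsto\mathcal{P}_2^L(t,T)$ is \emph{not} right slice hyperholomorphic on a neighbourhood of $\overline{U}$---it is singular precisely on $\sigma_S(T)\subset U$. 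The Cauchy formula \eqref{cauchy} therefore cannot be applied on $U$ to this function.

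The fix is standard and keeps the same skeleton. Either (i) reverse the roles and represent the \emph{function} $f$ via the left Cauchy formula on the larger domain (for $s\in\partial(V\cap\cc_{J_2})$ write $f(s)=\tfrac{1}{2\pi}\int_{\partial(U\cap\cc_{J_1})}S_L^{-1}(t,s)\,dt_{J_1}\,f(t)$, then swap), or (ii) represent the resolvent via a Cauchy formula on the \emph{unbounded} slice Cauchy domain $\hh\setminus\overline{V}$, which is legitimate because $\mathcal{P}_2^L(\cdot,T)$ is right slice hyperholomorphic there and decays at infinity (indeed $F_L(s,T)=O(|s|^{-3})$, so $\mathcal{P}_2^L(s,T)=O(|s|^{-2})$). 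Route~(ii) is the one actually used in \cite[Theorem~4.6]{CG} and in the well-posedness proof of the $\mathcal{F}$-functional calculus \cite[Thm.~7.1.12]{CGKBOOK}; after this correction the Fubini step and the recognition of the inner integral go through exactly as you indicate.
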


\section{Concluding remarks}
This is a seminal work about a polyanalytic functional calculus of order 2. In this paper we have introduced its definition and we have showed that it is well defined. In a forthcoming paper, we will show more properties for this functional calculus. For example, in this setting a resolvent equation holds. This is useful to prove a product rule and it is also crucial to generate the Riesz projectors. Moreover, we aim to study other polyanalytic functional calculi with orders bigger than two. Using the techniques and strategies developed in this paper, we cannot consider a polyanalytic functional calculus of order more than two.

\bibliographystyle{amsplain}

\end{document}